\newcommand{\rr}{\mathbb{R}}
\newtheorem{Def}{Definition}
\newtheorem{Lem}{Lemma}
\newtheorem{Prop}{Proposition}
\newtheorem{Thm}{Theorem}
\newtheorem{Cor}{Corollary}
\newenvironment{Pf}{ Proof.}{\(\square\)}
\newtheorem{Exm}{Example}
\newtheorem{Con}{Open problem}
\title[On locally symmetric polynomial metrics...]{On locally symmetric polynomial metrics: Riemannian and Finslerian surfaces}
\author{Csaba Vincze}
\address{Institute of Mathematics, University of Debrecen, H-4002 Debrecen, P. O. Box 400, Hungary}
\email{csvincze@science.unideb.hu}
\author{M\'{a}rk Ol\'{a}h}
\address{Institute of Mathematics, University of Debrecen, H-4002 Debrecen, P. O. Box 400, Hungary \newline
\indent ELKH-DE Equations, Functions, Curves and their Applications Research Group}
\email{olah.mark@science.unideb.com}
\author{\'{A}bris Nagy}
\address{Institute of Mathematics, University of Debrecen, H-4002 Debrecen, P. O. Box 400, Hungary}
\email{abris.nagy@science.unideb.hu}
\keywords{Finsler surfaces, Locally symmetric polynomial metrics, Positive definiteness}
\subjclass{53C60, 58B20}
\begin{document}
\begin{abstract} In the paper we investigate locally symmetric polynomial metrics in special cases of Riemannian and Finslerian surfaces. The Riemannian case will be presented by a collection of basic results (regularity of second root metrics) and formulas up to Gauss curvature. In case of Finslerian surfaces we formulate necessary and sufficient conditions for a locally symmetric fourth root metric in 2D to be positive definite. They are given in terms of the coefficients of the polynomial metric to make checking the positive definiteness as simple and direct as possible. Explicit examples are also presented. The situation is more complicated in case of spaces of dimension more than two. Some necessary conditions and an explicit example are given for a positive definite locally symmetric polynomial metric in 3D. Computations are supported by the MAPLE mathematics software (LinearAlgebra).
\end{abstract}
\maketitle
\section*{Introduction}
Let $M$ be a differentiable manifold with local coordinates $u_1, \ldots, u_n$ (as index calculus is limited, we are going to use   lower indices for the coordinates to make polynomial formulas easier to read). The induced coordinate system of the tangent manifold $TM$ consists of the functions $x_1=u_1\circ \pi, \ldots, x_n=u_n\circ \pi$ and $y_1=du_1, \ldots, y_n=du_n$, where $\pi\colon TM\to  M$ is the canonical projection of the tangent vectors to the base points. A Finsler metric \cite{BSC} is a non-negative continuous function $F\colon TM\to \mathbb{R}$ satisfying the following conditions: 
\begin{itemize}
\item[(F1)] $F$ is smooth on the complement of the zero section, 
\item[(F2)] $F(tv)=tF(v)$ for all $t> 0$ (positive homogeneity of degree one),
\item[(F3)] the Hessian matrix 
$$g_{ij}=\frac{\partial^2 E}{\partial y_i\partial y_j}$$ 
\end{itemize}
of the energy function $\displaystyle{E=F^2/2}$ is positive definite at all nonzero elements 
$\displaystyle{v \in \pi^{-1}(U)\subset TM}$. 
Using Euler's theorem for homogeneous functions, the positive definiteness of the Hessian implies that
$$2E(v)=\sum_{i, j=1}^n v_iv_j g_{ij}(v)>0$$ 
for any nonzero element of the tangent manifold. Since $F$ is non-negative it also follows that
$F(v)\geq 0$ and $F(v)=0$ if and only if $v={\bf 0}$.  The Riemann--Finsler metric $g_{ij}$ makes each tangent space (except at the
origin) a Riemannian manifold with standard canonical objects such as the volume form
$$d\mu=\sqrt{\det g_{ij}} dy_1\wedge \ldots \wedge dy_n,$$
the Liouville vector field
$$C=y_1\frac{\partial}{\partial y_1}+\ldots +y_n\frac{\partial}{\partial y_n}$$
and the induced volume form on the indicatrix hypersurface $\partial K_p= F^{-1}(1)\cap T_pM$, where $p\in M$. The coordinate expression is
$$\mu=\sqrt{\det g_{ij}}\sum_{i=1}^n (-1)^{i-1} \frac{y_i}{F} dy_1\wedge \ldots \wedge dy_{i-1}\wedge dy_{i+1}\wedge \ldots \wedge dy_n.$$

\begin{Def}
Let $m=2l$ be a positive natural number, $l=1, 2, \ldots. $ A Finsler metric $F$ is called an $m$-th root metric if its $m$-th power $F^m$ is of class $C^{m}$ on the tangent manifold $TM$.
\end{Def}

Using that $F$ is positively homogeneous of degree one, its $m$-th power is positively homogeneous of degree $m$. If it is of class $C^m$ on the tangent manifold $TM$ (including the zero section), then its local form must be a polynomial\footnote{It can be seen by a simple induction: if a zero homogeneous function is continuous at the origin, then the homogeneity property implies that for any real number $t>0$,
$$f(tv)=f(v)\ \ \Rightarrow \ \ f({\bf 0})=\lim_{t\to 0^+}f(tv)=f(v),$$
i.e. the function is constant in the tangent spaces. Since partial differentiation with respect to the variables 
$y_1$, $\ldots$, $y_n$ decreases the degree of homogeneity, we can apply the inductive hypothesis to conclude the statement.} of degree $m$ in the variables $y_1$, $\ldots$, $y_n$ as follows:
\begin{equation}
\label{polgen}
F^m(x,y)=\sum_{i_1+\ldots+i_n=m} a_{i_1 \ldots i_n}(x) y_1^{i_1} \ldots y_n^{i_n}.
\end{equation}
Finsler metrics of the form (\ref{polgen}) have been introduced by Shimada \cite{Shim}. They are generalizations of the so-called Berwald-Mo\'{o}r metrics. The geometry of $m$-th root metrics and some special cases have been investigated by several authors such as  M. Matsumoto, K. Okubo, V. Balan, N. Brinzei, L. Tam\'{a}ssy, A. Tayebi and B. Najafi etc. in \cite{BB}, \cite{VB}, \cite{Brin}, \cite{MO}, \cite{TN} and \cite{Tam}.

\begin{Exm}
{\emph{Riemannian metrics are $2$nd root metrics, i.e. $m=2$.}} 
\end{Exm}

\begin{Def}{\emph{\cite{OKV}}}
$F$ is a locally symmetric $m$-th root metric if each point has a coordinate neighbourhood such that $F^m$ is a symmetric polynomial of degree $m$ in the variables $y_1$, $\ldots$, $y_n$ of the induced coordinate system on the tangent manifold.
\end{Def}

\begin{Exm}
{\emph{Taking an arbitrary $m$-th root metric $F$ we can construct a locally symmetric $m$-th root metric over a coordinate neighbourhood by the symmetrization process
$$F_{\textrm{sym}}^m(x, y_1, \ldots, y_n)=\frac{1}{n!}\sum_{\sigma}F^m(x, y_{\sigma(1)}, \ldots, y_{\sigma(n)}),$$
where $\sigma$ runs through the permutations of the indices.}}
\end{Exm}

Suppose that formula (\ref{polgen}) is a symmetric expression of $F^m(x,y)$ in the variables $y_1$, $\ldots$, $y_n$. Using the fundamental theorem of symmetric polynomials, we can write that
\begin{equation}
\label{charpol}
F^m(x,y)=P(s_1, \ldots, s_n),
\end{equation}
where 
$$s_1=y_1+\ldots +y_n, \ s_2=y_1y_2+\ldots+y_{n-1}y_n, \ \ldots, \ s_n=y_1 \ldots y_n$$
are the so-called elementary symmetric polynomials. The polynomial $P$ with coefficients depending on the position is called the \emph{local characteristic polynomial} of the locally symmetric $m$-th root metric. Using the homogeneity properties, the reduction of the number of the coefficients depending on the position is
\begin{equation}
\label{polspec}
F^m(x,y)=\sum_{j_1+2j_2+\ldots+nj_n=m} c_{j_1 \ldots j_n}(x) s_1^{j_1}\ldots s_n^{j_n}.
\end{equation}
In case of $m=2$ (second root metrics -- Riemannian case) and $n\geq 2$, 
$$P(s_1, \ldots, s_n)=c_{20}(x) s_1^2+c_{01}(x)s_2.$$
In case of $m=4$ (fourth root metrics) and $n=2$, $3$, $4$ we have
\begin{align*}
P(s_1, s_2)&=c_{40}(x)s_1^4+c_{21}(x)s_1^2s_2+c_{02}(x)s_2^2,\\
P(s_1, s_2, s_3)&=c_{400}(x)s_1^4+c_{210}(x)s_1^2s_2+c_{020}(x)s_2^2+c_{101}(x)s_1s_3,\\
P(s_1, s_2,s_3,s_4)&=c_{4000}(x)s_1^4+c_{2100}(x)s_1^2s_2+c_{0200}(x)s_2^2+c_{1010}(x)s_1s_3+c_{0001}(x)s_4,
\end{align*}
respectively. Locally symmetric fourth root metrics for $n\geq 5$ are similar up to the formal zero coefficients of the terms $s_5$, $\ldots$, $s_n$.
\begin{Cor} {\emph{\cite{OKV}}} A locally symmetric fourth root metric is locally determined by at most five components of its local characteristic polynomial.
\end{Cor}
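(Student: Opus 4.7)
The plan is to reduce the Corollary to a finite combinatorial enumeration based on the general representation (\ref{polspec}) applied to $m=4$. Specifically, the monomials appearing in $F^4$ are indexed by the non-negative integer tuples $(j_1, \ldots, j_n)$ satisfying $j_1 + 2j_2 + 3j_3 + \ldots + n j_n = 4$, and the claim is that there are at most five such tuples, independently of the dimension $n$.

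The key observation is that since the weighted sum equals $4$, any index $j_k$ with $k \geq 5$ must vanish. Consequently, the enumeration reduces to tuples of the form $(j_1, j_2, j_3, j_4, 0, \ldots, 0)$ solving $j_1 + 2j_2 + 3j_3 + 4j_4 = 4$. I would then carry out the case analysis by descending on $j_4$: if $j_4 = 1$, the equation forces $j_1 = j_2 = j_3 = 0$, yielding the monomial $s_4$; if $j_4 = 0$ and $j_3 = 1$, then $j_1 + 2j_2 = 1$ forces $(j_1, j_2) = (1,0)$, giving $s_1 s_3$; finally, if $j_3 = j_4 = 0$, the reduced equation $j_1 + 2j_2 = 4$ admits precisely the three solutions $(4,0)$, $(2,1)$, $(0,2)$, contributing $s_1^4$, $s_1^2 s_2$, and $s_2^2$.

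Combining the cases, exactly five admissible monomials arise — namely $s_1^4$, $s_1^2 s_2$, $s_2^2$, $s_1 s_3$, $s_4$ — matching the coefficients $c_{4000}$, $c_{2100}$, $c_{0200}$, $c_{1010}$, $c_{0001}$ already displayed above the Corollary. For $n \in \{2,3\}$ the coefficients corresponding to $s_k$ with $k > n$ are simply absent, so the bound of five holds trivially; for $n \geq 4$ the count stabilizes at exactly five, again as asserted.

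There is essentially no obstacle to overcome: the statement is a direct combinatorial corollary of the homogeneity-driven representation (\ref{polspec}), and the only thing to verify is that the weighted partition problem has five solutions. The slightly delicate (but still routine) point is making clear that the bound is uniform in $n$, which follows at once from the unavailability of weights larger than four inside a sum constrained to equal four.
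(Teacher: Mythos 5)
Your enumeration of the solutions to $j_1 + 2j_2 + \cdots + nj_n = 4$ is correct and matches the reasoning the paper implicitly uses: the text preceding the Corollary derives formula \eqref{polspec} from homogeneity, then lists the five monomials $s_1^4$, $s_1^2 s_2$, $s_2^2$, $s_1 s_3$, $s_4$ for $n=2,3,4$ and observes that higher $n$ adds only formally zero coefficients. You have simply made the weighted-partition count explicit, which is the same argument the paper relies on.
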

 Let us introduce the following notations:
$$A(x,y):=F^m(x,y)=\sum_{i_1+\ldots+i_n=m} a_{i_1 \ldots i_n}(x)y_1^{i_1} \ldots y_n^{i_n}, \ \ A_i:=\frac{\partial A}{\partial y_i} \ \ \textrm{and} \ \ A_{ij}:=\frac{\partial^2 A}{\partial y_i \partial y_j}.$$

\begin{Lem} \label{lemma:1}
For any $m$-th root metric the Hessian $A_{ij}$ is positive definite at all nonzero elements $\displaystyle{v \in \pi^{-1}(U)\subset TM}$.
\end{Lem}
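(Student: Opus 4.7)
The plan is to express $A=F^m$ as a power of the energy function $E=F^2/2$ and then differentiate twice, so that the Hessian $A_{ij}$ is written explicitly as a non-negative combination of $g_{ij}=E_{ij}$ (the Riemann--Finsler metric tensor) and of the rank-one tensor $E_iE_j$. Since $m=2l$, I write $A=(2E)^{l}$, where $l\geq 1$. A direct differentiation yields
\[
A_i=2l(2E)^{l-1}E_i,\qquad A_{ij}=4l(l-1)(2E)^{l-2}E_iE_j+2l(2E)^{l-1}g_{ij},
\]
all partial derivatives being taken at a nonzero $v\in\pi^{-1}(U)$.

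Next I would evaluate the quadratic form associated with $A_{ij}$ on an arbitrary test vector $\xi=(\xi^1,\ldots,\xi^n)$:
\[
A_{ij}\xi^i\xi^j=4l(l-1)(2E)^{l-2}\bigl(E_i\xi^i\bigr)^2+2l(2E)^{l-1}\,g_{ij}\xi^i\xi^j.
\]
Since $F$ is a Finsler metric, $2E=F^2>0$ at every nonzero $v$, so the scalar factors $(2E)^{l-1}$ and (when $l\geq 2$) $(2E)^{l-2}$ are strictly positive; the numerical coefficients $2l$ and $4l(l-1)$ are non-negative because $l\geq 1$. The first summand is therefore non-negative as a perfect square times a positive scalar, and in fact vanishes identically when $l=1$ (the Riemannian case, where $A=2E$ and $A_{ij}=2g_{ij}$).

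The crucial positivity comes from the second summand: by axiom (F3) the Hessian $g_{ij}$ is positive definite at $v$, so $g_{ij}\xi^i\xi^j>0$ whenever $\xi\neq 0$, and the prefactor $2l(2E)^{l-1}$ is strictly positive. Consequently $A_{ij}\xi^i\xi^j>0$ for every nonzero $\xi$, which is the desired positive definiteness of the Hessian of $A=F^m$.

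There is no real obstacle here; the whole argument is a one-line computation together with a careful bookkeeping of signs. The only mildly delicate point is that the coefficient $l(l-1)$ degenerates when $l=1$, but this is precisely the Riemannian case in which the statement reduces to the positive definiteness of $g_{ij}$ itself and therefore requires no separate treatment beyond remarking that the offending term disappears.
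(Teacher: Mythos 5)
Your proof is correct and is essentially the same argument as the paper's: you differentiate $A=(2E)^l$ twice to obtain the decomposition $A_{ij}=4l(l-1)(2E)^{l-2}E_iE_j+2l(2E)^{l-1}g_{ij}$, which after dividing by $2^l$ is exactly the paper's formula $A_{ij}/2^l=l(l-1)E^{l-2}E_iE_j+lE^{l-1}g_{ij}$, and you conclude positivity from the rank-one term being non-negative and the $g_{ij}$-term being strictly positive. The only difference is presentational: you spell out the evaluation on a test vector $\xi$ and the degenerate case $l=1$, whereas the paper states the implication more tersely.
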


\begin{Pf}
Since $\displaystyle{F^m/2^l=E^l}$, we have that
\begin{equation}
\label{lemma:01}
A_{ij}/2^l=l(l-1)E^{l-2}\frac{\partial E}{\partial y_i}\frac{\partial E}{\partial y_j}+lE^{l-1}\frac{\partial^2 E}{\partial y_i\partial y_j},
\end{equation}
where $m=2l$. This means that the positive definiteness of the Hessian of the energy function implies the positive definiteness of the Hessian of the $m$-th power function. 
\end{Pf}

\begin{Lem} \label{lemma:2}
If the Hessian $A_{ij}$ of a smooth, pointwise polynomial function 
$$A(x,y)=\sum_{i_1+\ldots+i_n=m} a_{i_1 \ldots i_n}(x) y_1^{i_1} \ldots y_n^{i_n}$$
is positive definite at all nonzero elements $\displaystyle{v \in \pi^{-1}(U)\subset TM}$ then its $m$-th root is a Finsler metric.
\end{Lem}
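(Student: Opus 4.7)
The plan is to set $F := A^{1/m}$ and verify (F1)--(F3) one at a time. First I would deduce that $A(v) > 0$ for every nonzero $v \in \pi^{-1}(U)$. Applying Euler's theorem twice to the homogeneous polynomial $A$ of degree $m$ in $y$, we get the identity $\sum_{i,j} y_i y_j A_{ij} = m(m-1) A$; combined with the assumed positive definiteness of $A_{ij}$ at $v$, the left side is positive, so $A(v)>0$. Hence $F = A^{1/m}$ is well-defined, smooth on the complement of the zero section (which is (F1)), and (F2) follows from the positive homogeneity of $A$ of degree $m=2l$.

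The serious task is (F3). I would express the Riemann--Finsler metric directly from $E = F^2/2 = A^{2/m}/2$. A straightforward differentiation yields
$$g_{ij} = \frac{1}{m^2}\, A^{(2-2m)/m}\bigl[\,m A\, A_{ij} - (m-2)\, A_i A_j\,\bigr].$$
Since the scalar prefactor is positive, it suffices to show that the symmetric matrix in brackets is positive definite, i.e. that
$$m A \sum_{i,j}\xi_i \xi_j A_{ij} - (m-2)\Bigl(\sum_i \xi_i A_i\Bigr)^{\!2} > 0$$
for every nonzero $\xi \in \rr^n$.

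The key observation is that the assumed positive definiteness of $A_{ij}$ turns $\langle \xi,\eta\rangle_A := \sum_{i,j}\xi_i \eta_j A_{ij}$ into an honest inner product. Using Euler's theorem I would compute $\langle \xi, v\rangle_A = (m-1)\sum_i \xi_i A_i$ and $\langle v,v\rangle_A = m(m-1)\,A$, so the Cauchy--Schwarz inequality in $\langle\cdot,\cdot\rangle_A$ reads
$$(m-1)\Bigl(\sum_i \xi_i A_i\Bigr)^{\!2} \leq m A \sum_{i,j}\xi_i \xi_j A_{ij}.$$
Subtracting $(m-2)(\sum_i \xi_i A_i)^2$ from both sides of this bound yields
$$m A \sum_{i,j}\xi_i \xi_j A_{ij} - (m-2)\Bigl(\sum_i \xi_i A_i\Bigr)^{\!2} \geq \Bigl(\sum_i \xi_i A_i\Bigr)^{\!2} \geq 0,$$
with strict inequality whenever $\sum_i \xi_i A_i \neq 0$.

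The main obstacle, and the step I would be most careful about, is ensuring that this estimate is \emph{strict} for every nonzero $\xi$ rather than merely non-negative. I would handle this via a case split: if $\sum_i \xi_i A_i \neq 0$ the bound above is already strict; if $\sum_i \xi_i A_i = 0$ the bracket collapses to $m A\sum_{i,j}\xi_i\xi_j A_{ij}$, which is positive since $A(v)>0$ and $A_{ij}$ is positive definite at $v$. Thus $g_{ij}$ is positive definite on all nonzero elements, establishing (F3) and completing the verification that $F = A^{1/m}$ is a Finsler metric.
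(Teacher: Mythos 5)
Your proof is correct, and it reaches the conclusion by a route that is technically different from the paper's. The paper works through the energy function $E = F^2/2$: it decomposes an arbitrary nonzero $z\in T_pM$ as $z = w + tv$ with $\sum_i w_i\,\partial E/\partial y_i(v)=0$, uses homogeneity to kill the cross terms in $\sum_{i,j}z_iz_j\,\partial^2E/\partial y_i\partial y_j(v)$, and then feeds the positivity of $A_{ij}$ back in via the identity $A_{ij}/2^l = l(l-1)E^{l-2}E_iE_j + lE^{l-1}E_{ij}$ to conclude positivity on the $w$-part (the $tv$-part being handled by $2t^2E(v)>0$). You instead write out $g_{ij}$ in closed form as $\tfrac{1}{m^2}A^{(2-2m)/m}\bigl[mA\,A_{ij}-(m-2)A_iA_j\bigr]$ and reduce everything to showing positive definiteness of the bracketed matrix, which you get from the Cauchy--Schwarz inequality in the $A_{ij}$-inner product together with the Euler identities $\sum_j v_jA_{ij}=(m-1)A_i$ and $\sum_{i,j}v_iv_jA_{ij}=m(m-1)A$. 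The two arguments are conceptually parallel (both are ultimately handling a rank-one negative perturbation of a positive definite form, split along a radial direction and a transversal one), but yours is more purely algebraic and has the virtue of making the quadratic form $mA\,A_{ij}-(m-2)A_iA_j$ and the role of Cauchy--Schwarz completely explicit, whereas the paper's stays at the level of $E$ and its Hessian; your careful case split on whether $\sum_i\xi_iA_i$ vanishes is exactly the right way to secure strictness, and mirrors the paper's implicit dichotomy between $w\neq 0$ and $w=0,\ t\neq 0$.
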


\begin{Pf}
Let us choose a nonzero element $v\in T_pM$. Using Euler's theorem for homogeneous functions, the positive definiteness
$$0< \sum_{i, j=1}^n v_i v_j A_{ij}(v)=m(m-1)A(v)$$
implies that we can introduce the function $F(v)=\sqrt[m]{A(v)}>0$ and $F({\bf 0})=0$. It is continuous, non-negative and smooth on the complement of the zero section (see (F1)). The homogeneity property (F2) is automatically satisfied. Let $z\in T_pM$ be a nonzero element of the form\footnote{For such a decomposition 
$$t=\sum_{i=1}^n\frac{z_i}{2E(v)}\frac{\partial E}{\partial y_i}(v),$$
where $E=F^2/2$.} $z=w+tv$ such that                                       
$$\sum_{i=1}^n w_i\frac{\partial E}{\partial y_i}(v)=0.$$
We have
$$\sum_{i, j=1}^n(w_i+tv_i)(w_j+tv_j)\frac{\partial^2 E}{\partial y_i\partial y_j}(v)=$$
$$\sum_{i, j=1}^n w_i w_j\frac{\partial^2 E}{\partial y_i\partial y_j}(v)+2t \sum_{i=1}^n w^i\frac{\partial E}{\partial y_i}(v)+t^2 \sum_{i, j=1}^n v_i v_j \frac{\partial^2 E}{\partial y_i\partial y_j}(v)=$$
$$\sum_{i, j=1}^n w_i w_j\frac{\partial^2 E}{\partial y_i\partial y_j}(v)+2t^2E(v)$$
by the homogeneity of degree $2$ of the energy function $E=F^2/2$. Therefore
$$0\leq \sum_{i, j=1}^n w_i w_j A_{ij}(v)/2^l=lE^{l-1}(v) \sum_{i, j=1}^n w_i w_j\frac{\partial^2 E}{\partial y_i\partial y_j}(v)$$
implies that the Hessian of the energy function is positive definite even if $w={\bf 0}$.  
\end{Pf}

\subsection*{Some notes about the linear isometry group} The \emph{linear isometry group} of a Finsler metric at a point $p\in M$ consists of linear transformations $\varphi\colon T_pM\to T_pM$ such that 
$$F\circ \varphi (v)=F(v)\ \ (v\in T_pM) \ \ \Rightarrow \ \ E\circ \varphi (v)=E(v)\ \ (v\in T_pM).$$
Using the linearity of $\varphi$, second order partial differentiation of the energy function by the directional variables $y_1, \ldots, y_n$ shows that $\varphi$ is an isometry with respect to the Riemann--Finsler metric in the sense that for any non-zero element $v\in T_pM$
$$\sum_{i, j=1}^n \varphi_i(w)\varphi_j(z)g_{ij}\circ \varphi(v)=\sum_{i, j=1}^n w_i z_j g_{ij}(v)\ \ (w, z\in T_pM).$$
Therefore the linear isometry group is a closed and, consequently, compact subgroup in the orthogonal group with respect to the \emph{averaged Riemannian metric} 
$$\gamma_{ij}(p)=\int_{\partial K_p} g_{ij}\, \mu;$$
for more details about the applications of average processes in Finsler geometry see \cite{V11}. 
Suppose that formula (\ref{polgen}) is a symmetric expression of $F(x,y)$ in the variables $y_1$, $\ldots$, $y_n$. It is easy to see that the permutation group consisting of 
$$\varphi_{\sigma}(y_1, \ldots, y_n):=(y_{\sigma(1)}, \ldots, y_{\sigma(n)})$$
belongs to the linear isometry group of a locally symmetric polynomial Finsler metric at each point of a local neighbourhood, where $\sigma$ runs through the permutations of the indices. The permutation group is always reducible because equation
$$y_1+\ldots+y_n=0 \ \ \Leftrightarrow \ \ df=0, \ \textrm{where}\ f=u_1+\ldots+u_n$$
gives invariant hyperplanes in the tangent spaces. It is an integrable distribution with level sets of the function $f$ as integral manifolds. Since
$$\gamma_p(v,v)=\gamma_p(\varphi_{\sigma}(v), \varphi_{\sigma}(v)),$$
it also follows that a locally symmetric polynomial metric induces a locally symmetric averaged Riemannian metric (second root metric). It is investigated in the next section. 

\section{On the regularity of locally symmetric second root metrics: Riemannian manifolds}

Let $M$ be a connected Riemannian manifold with a locally symmetric second root metric $F=\sqrt{A}$. Its local characteristic polynomial must be of the form
$$
P(s_1, \ldots, s_n)=F^2(x,y)=A(x,y)=a(x) s_1^2 + b(x) s_2, 
$$
where $a(x)=c_{20}(x)$ and $b(x)=c_{01}(x)$ are smooth coefficients depending on the position. Therefore
$$
A_i(x,y)= 2 a(x) s_1 + b(x) (s_1-y_i),\ \ A_{ij}(x,y)= 2 a(x)+b(x)(1-\delta_{ij}) \ \ (i, j=1, \ldots, n).
$$
In the sense of Lemma 2 we have to check the positive definiteness of the Hessian. The positive definiteness of $A_{ij}$ obviously implies that $a(x)>0$ ($i=j=1$) and we can write the Hessian in the form
$$A_{ij}(x,y)= 2 a(x)\left(1+\frac{h(x)}{2}(1-\delta_{ij})\right),$$
where $h(x)=b(x)/a(x)$. Introducing the parameter
$$p(x)=1+\frac{h(x)}{2},$$
we are going to check the positive definiteness up to a positive conformal term:
$$g_{ij}(x)=1+\frac{h(x)}{2}(1-\delta_{ij})=\begin{bmatrix}
	1 & p(x) & p(x) & \ldots & p(x)\\
  p(x) & 1 & p(x) & \ldots & p(x)\\
	p(x) & p(x) & 1 & \ldots & p(x)\\
  \vdots & \vdots & \vdots &  & \vdots \\
  p(x) & p(x) & p(x) & \ldots & 1
	\end{bmatrix}.$$
Its characteristic polynomial is
	\[\begin{vmatrix}
	1-\lambda & p(x) & p(x) & \ldots & p(x)\\
  p(x) & 1-\lambda & p(x) & \ldots & p(x)\\
	p(x) & p(x) & 1-\lambda & \ldots & p(x)\\
  \vdots & \vdots & \vdots &  & \vdots \\
  p(x) & p(x) & p(x) & \ldots & 1-\lambda
	\end{vmatrix}.
\]
Subtracting the last row from all the rows above, we have
	\[\begin{vmatrix}
	1-\lambda-p(x) & 0 & 0 & \ldots & p(x)-1+\lambda\\
  0 & 1-\lambda-p(x) & 0 & \ldots & p(x)-1+\lambda\\
	0 & 0 & 1-\lambda-p(x) & \ldots & p(x)-1+\lambda\\
  \vdots & \vdots & \vdots &  & \vdots \\
  p(x) & p(x) & p(x) & \ldots & 1-\lambda
	\end{vmatrix}.
\]
Adding all columns to the last one gives
	\[\begin{vmatrix}
	1-\lambda-p(x) & 0 & 0 & \ldots & 0\\
  0 & 1-\lambda-p(x) & 0 & \ldots & 0\\
	0 & 0 & 1-\lambda-p(x) & \ldots & 0\\
  \vdots & \vdots & \vdots &  & \vdots \\
  p(x) & p(x) & p(x) & \ldots & 1-\lambda+(n-1)p(x)
	\end{vmatrix}
\]
and the determinant is equal to the product of the elements in its diagonal:
	\[(1-p(x)-\lambda)^{n-1}\left(1+(n-1)p(x)-\lambda\right).
\]
Therefore the matrix has eigenvalues $\lambda_1=1-p(x)$ and $\lambda_2=1+(n-1)p(x)$. It is positive definite if and only if $1-p(x)>0$  and $1+(n-1)p(x)>0$, which is equivalent to
	\begin{equation}-\,\frac{1}{n-1}<p(x)<1
\end{equation}
and we have the following result.

\begin{Prop}
The Hessian
$$A_{ij}(x,y)= 2 a(x)+b(x)(1-\delta_{ij})= 2 a(x)\left(1+\frac{h(x)}{2}(1-\delta_{ij})\right)$$
is positive definite at all nonzero elements $\displaystyle{v \in \pi^{-1}(U)\subset TM}$ if and only if
$$a(x)>0 \quad \textrm{and} \quad -\,\frac{1}{n-1}<p(x)<1,$$
where $h(x)=b(x)/a(x)$ and
$$p(x)=1+\frac{h(x)}{2}.$$
\end{Prop}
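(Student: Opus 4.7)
My plan is to reduce positive definiteness to a direct spectral computation for a symmetric matrix with constant diagonal and constant off-diagonal entries. First I would observe that the diagonal entries of $A_{ij}$ equal $2a(x)$ (set $i=j$ so that $1-\delta_{ii}=0$), so positive definiteness immediately forces $a(x)>0$. Conversely, once $a(x)>0$ is granted, one may factor out the positive scalar $2a(x)$, and the problem reduces to testing positive definiteness of the matrix $g_{ij}(x)=1+\frac{h(x)}{2}(1-\delta_{ij})$, whose diagonal entries are $1$ and whose off-diagonal entries are all equal to $p(x)=1+h(x)/2$.

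Next I would compute the spectrum of this matrix. The cleanest route is to write $g=(1-p(x))I+p(x)J$, where $J$ is the $n\times n$ all-ones matrix, which has rank one with eigenvalue $n$ on the span of $(1,\ldots,1)$ and eigenvalue $0$ on the hyperplane $\{y:y_1+\cdots+y_n=0\}$. Hence $g$ acts as multiplication by $1-p(x)$ on that hyperplane (with multiplicity $n-1$) and as $1+(n-1)p(x)$ on the span of $(1,\ldots,1)$. A more pedestrian alternative, essentially the one executed in the display preceding the proposition, is to triangularize the characteristic determinant by subtracting the last row from each of the earlier rows and then adding the first $n-1$ columns to the last one; the resulting upper triangular matrix has diagonal product $(1-p(x)-\lambda)^{n-1}\bigl(1+(n-1)p(x)-\lambda\bigr)$, from which the two eigenvalues can be read off.

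Since positive definiteness of a symmetric matrix is equivalent to positivity of all its eigenvalues, both directions of the equivalence follow at once: positive definiteness of the bracketed matrix holds if and only if $1-p(x)>0$ and $1+(n-1)p(x)>0$, which is equivalent to the two-sided inequality $-\frac{1}{n-1}<p(x)<1$. Combined with the necessary condition $a(x)>0$, this yields the stated equivalence. There is no real obstacle in the argument; the only observation worth isolating is the rank-one-plus-scalar decomposition of $g$, which renders its spectrum transparent and makes both directions immediate.
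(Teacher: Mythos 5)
Your proof is correct, and your primary route is a cleaner variant of what the paper does. The paper establishes $a(x)>0$ exactly as you do (evaluate the quadratic form on a standard basis vector), then obtains the eigenvalues $1-p(x)$ (multiplicity $n-1$) and $1+(n-1)p(x)$ by row- and column-reducing the characteristic determinant of $g_{ij}$ into upper-triangular form. Your decomposition $g=(1-p)I+pJ$ with $J$ the all-ones matrix reaches the same spectrum more transparently: it simultaneously produces the eigenvalues and the eigenspaces (the diagonal line $\operatorname{span}(1,\ldots,1)$ and its orthogonal complement $\{y_1+\cdots+y_n=0\}$), and the latter hyperplane is precisely the invariant hyperplane the paper discusses in its remarks on the linear isometry group, so your route makes that structural connection visible where the paper's determinant manipulation does not. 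You correctly note that the triangularization argument is the one the paper actually executes, and both paths close the equivalence by the standard fact that a symmetric matrix is positive definite if and only if all its eigenvalues are positive. There is no gap.
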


\begin{Prop} If the matrix
$$g_{ij}(x)=1+\frac{h(x)}{2}(1-\delta_{ij})$$
is positive definite then its inverse is
	\[g^{ij}(x)=\frac{1}{(1-p(x))(1+(n-1)p(x))}\,b^{ij}(x),
\]
where
	\[b^{ij}(x)=\begin{cases}
	1+(n-2)p(x) & \textrm{if }i=j,\\
	-p(x) & \textrm{if }i\neq j.
	\end{cases}
\]
\end{Prop}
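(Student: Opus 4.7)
The plan is to exploit the highly structured form of the matrix. Writing $J$ for the $n\times n$ all-ones matrix and $I$ for the identity, the matrix $g_{ij}$ decomposes as $g = (1-p(x))\,I + p(x)\,J$, since its diagonal entries equal $1$ and its off-diagonal entries equal $p(x)$. Because $J^2 = nJ$, the two-parameter family $\{\alpha I + \beta J : \alpha,\beta \in \mathbb{R}\}$ is a commutative subalgebra closed under matrix multiplication. This naturally suggests the ansatz $g^{-1} = \alpha I + \beta J$ for suitable $\alpha,\beta$ depending on $x$.

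Multiplying out $g\cdot(\alpha I + \beta J) = \alpha(1-p)\,I + \bigl(\alpha p + \beta(1-p) + n p \beta\bigr)\,J$ and demanding that the result equal $I$, I would obtain two scalar equations: $\alpha(1-p) = 1$ and $\alpha p + \beta(1 + (n-1)p) = 0$. The first equation yields $\alpha = 1/(1-p)$ at once, and substituting into the second gives $\beta = -p/[(1-p)(1+(n-1)p)]$. The only potential obstacle is that both denominators must be nonzero, but this is guaranteed by the preceding proposition: positive definiteness of $g_{ij}$ is equivalent to $1-p(x) > 0$ and $1 + (n-1)p(x) > 0$, so both factors are in fact strictly positive.

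Finally I would translate the formula $g^{-1} = \alpha I + \beta J$ back into componentwise form. The diagonal entries are $\alpha + \beta$, and placing this over the common denominator $(1-p)(1+(n-1)p)$ yields $[1+(n-1)p - p]/[(1-p)(1+(n-1)p)] = [1+(n-2)p]/[(1-p)(1+(n-1)p)]$, matching $b^{ii}$. The off-diagonal entries are simply $\beta = -p/[(1-p)(1+(n-1)p)]$, matching $b^{ij}$ for $i\neq j$. Thus the entire argument reduces to one line of algebra in the subalgebra generated by $I$ and $J$, with the nonvanishing of the denominators the only point that really calls upon the earlier positive definiteness criterion.
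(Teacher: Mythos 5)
Your proof is correct, but it takes a genuinely different route from the paper's. The paper simply \emph{verifies} the claimed inverse by direct componentwise multiplication: it computes $\sum_{k=1}^n g_{ik}b^{kj}$ separately for $i=j$ and $i\neq j$, obtaining $(1-p)(1+(n-1)p)$ and $0$ respectively, which confirms that $b^{ij}/[(1-p)(1+(n-1)p)]$ inverts $g_{ij}$. You instead \emph{derive} the inverse from scratch by observing that $g = (1-p)I + pJ$ lives in the two-dimensional commutative algebra generated by $I$ and the all-ones matrix $J$ (using $J^2=nJ$), making the ansatz $g^{-1}=\alpha I + \beta J$, and solving a $2\times 2$ linear system for $\alpha$ and $\beta$. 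Your approach is slightly longer but more illuminating: it explains where the formula for $b^{ij}$ comes from, shows why an inverse of this shape must exist, and incidentally recovers the eigenvalue information (the coefficients of $I$ and of $I-\frac{1}{n}J$ versus $\frac{1}{n}J$ in the spectral decomposition) that the paper had already extracted separately. The paper's verification is shorter once the answer is guessed, and it makes the dependence on the positive-definiteness criterion (nonvanishing of the denominator) equally transparent, a point you also correctly flagged. Both are complete and correct.
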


\begin{Pf} If $i=j$ then
	\[\sum_{k=1}^ng_{ik}b^{kj}=\sum_{k=1}^ng_{ik}b^{ki}=g_{ii}b^{ii}+\sum_{\substack{1\leq k\leq n\\ k\neq i}}g_{ik}b^{ki}=1+(n-2)p(x)-(n-1)p(x)^2=
\]
	\[=(1-p(x))(1+(n-1)p(x)).
\]If $i\neq j$ then
	\[\sum_{k=1}^ng_{ik}b^{kj}=g_{ii}b^{ij}+g_{ij}b^{jj}+\sum_{\substack{1\leq k\leq n\\ k\neq i,\,k\neq j}}g_{ik}b^{ki}=
\]
	\[=-\,p(x)+p(x)(1+(n-2)p(x))-(n-2)p(x)^2=0.
\]
\end{Pf}
\subsection{Canonical data on Riemannian surfaces} In what follows we are interested in the special case of Riemannian surfaces with $n=2$, i.e. 
	\[g_{ij}=\begin{bmatrix}
	1 & p(x)\\[0.2cm]
	p(x) & 1
	\end{bmatrix}\quad \textrm{and} \quad g^{ij}=\frac{1}{1-p(x)^2}\,\begin{bmatrix}
	1 & -p(x)\\[0.2cm]
	-p(x) & 1
	\end{bmatrix}.
\]
Using the abbreviation $\partial_i$ ($i=1, 2$) for differentiation with respect to the position, a straightforward calculation shows that
\begin{itemize}
\item[(i)] the Christoffel symbols
\end{itemize}
	\[\Gamma_{ij}^k=\frac{1}{2}g^{kl}\left(\partial_i g_{jl}+\partial_j g_{il}-\partial_l g_{ij}\right)
\]
are $\Gamma_{12}^1=\Gamma_{12}^2=\Gamma_{21}^1=\Gamma_{21}^2=0$, 
	\[\Gamma_{11}^1=-\frac{p(x)\,\partial_1p(x)}{1-p(x)^2},\ \Gamma_{11}^2=\frac{\partial_1p(x)}{1-p(x)^2},\ \Gamma_{22}^1=\frac{\partial_2p(x)}{1-p(x)^2},\ \Gamma_{22}^2=-\frac{p(x)\,\partial_2p(x)}{1-p(x)^2},
\]
\begin{itemize}
\item[(ii)] the components of the Riemannian curvature tensor 
\end{itemize}
\[R_{ijk}^l=\partial_i \Gamma_{jk}^l-\partial_j \Gamma_{ik}^l+\Gamma_{ir}^l\Gamma_{jk}^r-\Gamma_{jr}^l\Gamma_{ik}^r
\]
are $R_{111}^1=R_{111}^2=R_{112}^1=R_{112}^2=0$,
\begin{eqnarray*}
&& R_{121}^1=\frac{p(x)\,\big(1-p(x)^2\big)\,\partial_{1,2}p(x)+p(x)^2\,\partial_{1}p(x)\,\partial_{2}p(x)}{(1-p(x)^2)^2},\\
&& R_{121}^2=-\frac{(1-p(x)^2)\,\partial_{1,2}p(x)+p(x)\,\partial_{1}p(x)\,\partial_{2}p(x)}{(1-p(x)^2)^2},\\
&& R_{122}^1=\frac{(1-p(x)^2)\,\partial_{1,2}p(x)+p(x)\,\partial_{1}p(x)\,\partial_{2}p(x)}{(1-p(x)^2)^2},\\
&& R_{122}^2=-\frac{p(x)\,\big(1-p(x)^2\big)\,\partial_{1,2}p(x)+p(x)^2\,\partial_{1}p(x)\,\partial_{2}p(x)}{(1-p(x)^2)^2}, \\
&& R_{211}^1=-\frac{p(x)\,\big(1-p(x)^2\big)\,\partial_{1,2}p(x)+p(x)^2\,\partial_{1}p(x)\,\partial_{2}p(x)}{(1-p(x)^2)^2},\\
&& R_{211}^2=\frac{(1-p(x)^2)\,\partial_{1,2}p(x)+p(x)\,\partial_{1}p(x)\,\partial_{2}p(x)}{(1-p(x)^2)^2},\\
&& R_{212}^1=-\frac{(1-p(x)^2)\,\partial_{1,2}p(x)+p(x)\,\partial_{1}p(x)\,\partial_{2}p(x)}{(1-p(x)^2)^2}, \\
&& R_{212}^2=\frac{p(x)\,(1-p(x)^2)\,\partial_{1,2}p(x)+p(x)^2\,\partial_{1}p(x)\,\partial_{2}p(x)}{(1-p(x)^2)^2}
\end{eqnarray*}
and $R_{221}^1=R_{221}^2=R_{222}^1=R_{222}^2=0$. The lowered components 
	\[R_{ijkl}=g_{lm}\,R_{ijk}^m
\]
are $R_{1111}=R_{1112}=R_{1121}=R_{1122}=R_{1211}=R_{1222}=0$,
\begin{eqnarray*}
&& R_{1212}=-\frac{(1-p(x)^2)\,\partial_{1,2}p(x)+p(x)\,\partial_{1}p(x)\,\partial_{2}p(x)}{1-p(x)^2},\\
&& R_{1221}=\frac{(1-p(x)^2)\,\partial_{1,2}p(x)+p(x)\,\partial_{1}p(x)\,\partial_{2}p(x)}{1-p(x)^2},\\
&&R_{2112}=\frac{(1-p(x)^2)\,\partial_{1,2}p(x)+p(x)\,\partial_{1}p(x)\,\partial_{2}p(x)}{1-p(x)^2},\\
&&R_{2121}=-\frac{(1-p(x)^2)\,\partial_{1,2}p(x)+p(x)\,\partial_{1}p(x)\,\partial_{2}p(x)}{1-p(x)^2}
\end{eqnarray*}
and $R_{2111}=R_{2122}=R_{2211}=R_{2212}=R_{2221}=R_{2222}=0$,
\begin{itemize}
\item[(iii)] the components of the Ricci tensor
\end{itemize}
	\[{\textrm{Ric}}_{ij}=g^{km}\,R_{ijkm}
\]
are
	\begin{align*}
	{\textrm{Ric}}_{11}&={\textrm{Ric}}_{22}=\frac{(1-p(x)^2)\,\partial_{1,2}p(x)+p(x)\,\partial_{1}p(x)\,\partial_{2}p(x)}{(1-p(x)^2)^2},\\
{\textrm{Ric}}_{12}&={\textrm{Ric}}_{21}=\frac{p(x)\,(1-p(x)^2)\,\partial_{1,2}p(x)+p(x)^2\,\partial_{1}p(x)\,\partial_{2}p(x)}{(1-p(x)^2)^2},
\end{align*}
\begin{itemize}
\item[(iv)] the scalar curvature 
\end{itemize}
	\[S=g^{ij}{\textrm{Ric}}_{ij}
\]
is
	\[S=\frac{2(1-p(x)^2)\,\partial_{1,2}p(x)+2p(x)\,\partial_{1}p(x)\,\partial_{2}p(x)}{(1-p(x)^2)^2},
\]
\begin{itemize}
\item[(v)] the Gaussian curvature 
\end{itemize}
	\[K=\frac{1}{2}S\]
	is
\begin{equation}
\label{Gcurv2D}
K=\frac{\big(1-p(x)^2\big)\,\partial_{1,2}p(x)+p(x)\,\partial_{1}p(x)\,\partial_{2}p(x)}{(1-p(x)^2)^2}.
\end{equation}

The Gaussian curvature completely determines the curvature tensor of a Riemannian 2-manifold. In what follows we are going to find a local solution function $p(x)$ of equation \eqref{Gcurv2D} for surfaces with constant curvature.

\subsection{Riemannian surfaces with constant curvature} Given a real constant $k\in\rr$, let's solve equation
\begin{equation}
\label{curveq_k}
\frac{\big(1-p(x_1,x_2)^2\big)\,\partial_{1,2}p(x_1,x_2)+p(x_1,x_2)\,\partial_{1}p(x_1,x_2)\,\partial_{2}p(x_1,x_2)}{(1-p(x_1,x_2)^2)^2}=k,
\end{equation}
where $-1<p(x_1,x_2)<1$. 
\begin{itemize}
\item[(i)] If $k=0$, then we are looking for the possible solutions in the special form $p(x_1,x_2)=f_1(x_1)\,f_2(x_2)$.
\end{itemize}
Since 
$$\partial_{1}p(x_1,x_2)= f_1'(x_1)\,f_2(x_2), \ \partial_{2}p(x_1,x_2)= f_1(x_1)\,f_2'(x_2), \ \partial_{1,2}p(x_1,x_2)=f_1'(x_1)\,f_2'(x_2),$$
it follows that equation \eqref{curveq_k} reduces to 
\[f_1'(x_1)\,f_2'(x_2)=0
\]
and $p(x_1,x_2)=c_1\,f_2(x_2)$ or $p(x_1,x_2)=c_2\,f_1(x_1)$ are possible solutions.
\begin{itemize}
\item[(ii)] If $k\neq 0$, then both
\end{itemize}
\[p(x_1,x_2)=2\,\tanh^2\left(c_1\,x_1-\frac{kx_2}{c_1}+c_2\right)-1
\]
and
	\[p(x_1,x_2)=1-2\,\tanh^2\left(c_1\,x_1+\frac{kx_2}{c_1}+c_2\right)
\]
are possible solutions, where $c_1,c_2\in\rr$, $c_1\neq 0$.

\section{On the regularity of locally symmetric fourth root metrics: Finsler surfaces}

In what follows we formulate necessary and sufficient conditions for a locally symmetric fourth root metric in 2D to be positive definite. They are given in terms of the coefficients of the polynomial metric to make checking the positive definiteness as simple and direct as possible. Explicit examples will also be presented. 

\subsection{The Hessian} Let $M$ be a two-dimensional connected Finsler manifold (Finsler surface) with a locally symmetric fourth root metric $F=\sqrt[4]{A}$. Its local characteristic polynomial must be
\begin{equation}
P(s_1, s_2)=F^4(x,y)=A(x,y)=a(x) (y_1+y_2)^4 + b(x) (y_1+y_2)^2 y_1 y_2 + c(x) (y_1y_2)^2 \label{poly1}
\end{equation}
where $a(x)=c_{40}(x)$, $b(x)=c_{21}(x)$ and $c(x)=c_{02}(x)$. Writing $A$ in the usual form
\begin{equation} \label{poly2}
A(x,y)=l(x) \left(y_1^4+y_2^4\right) + m(x) \left(y_1^3y_2+y_1y_2^3 \right) + n(x) y_1^2y_2^2, 
\end{equation}
we can check that the coefficients are 
\begin{equation} l(x)= a(x), \ m(x)= 4 a(x) + b(x), \ n(x)= 6 a(x) + 2 b(x) + c(x),
\end{equation}
i.e. they are obtained by the regular linear transformation
\[
\left[
\begin{matrix}
l \\ m \\ n
\end{matrix}
\right]
=
\left[
\begin{matrix}
1 & 0 & 0 \\ 4 & 1 & 0 \\ 6 & 2 & 1
\end{matrix}
\right]
\left[
\begin{matrix}
a \\ b \\ c
\end{matrix}
\right].
\]
Therefore, the coefficients $a(x), b(x)$, $c(x)$ are uniquely determined by $l(x)$, $m(x)$, $n(x)$ and vice versa. Differentiating \eqref{poly2},
\begin{align*}
A_1(x,y)&= 4 l(x) y_1^3 + 3 m (x) y_1^2 y_2+ 2n (x)y_1 y_2^2 + m (x) y_2^3, \\
A_2(x,y)&= m (x)y_1^3 + 2n (x) y_1^2 y_2+ 3m (x)y_1 y_2^2 + 4l (x) y_2^3, \\
\intertext{and the second-order derivatives are}
A_{11}(x,y)&= 12 l(x) y_1^2 + 6m(x) y_1 y_2 + 2n(x) y_2^2,\\
A_{12}(x,y)&=A_{21}(x,y)= 3m(x)y_1^2 + 4n(x)  y_1 y_2 + 3m(x)y_2^2,  \\
A_{22}(x,y)&= 2n(x)y_1^2 + 6m(x) y_1 y_2 + 12 l(x) y_2^2 .
\end{align*}
The Hessian 
$$A_{ij}=\left[
\begin{matrix}
A_{11} & A_{12}\\ A_{21} & A_{22}
\end{matrix}
\right] $$
is of the form
$$A_{11}= \left[
\begin{matrix}
y_1 & y_2
\end{matrix}
\right]
\left[
\begin{matrix}
12l & 3m\\ 3m & 2n
\end{matrix}
\right]
\left[
\begin{matrix}
y_1 \\ y_2
\end{matrix}
\right], \ A_{12}= A_{21}=\left[
\begin{matrix}
y_1 & y_2
\end{matrix}
\right]
\left[
\begin{matrix}
3m & 2n\\ 2n & 3m
\end{matrix}
\right]
\left[
\begin{matrix}
y_1 \\ y_2
\end{matrix}
\right], $$
$$A_{22}= \left[
\begin{matrix}
y_1 & y_2
\end{matrix}
\right]
\left[
\begin{matrix}
2n & 3m\\ 3m & 12l
\end{matrix}
\right]
\left[
\begin{matrix}
y_1 \\ y_2
\end{matrix}
\right].
$$

\begin{Lem} \label{lem1} $A_{ij}$ is positive definite if and only if 
\begin{itemize}
\item $A_{11}>0$ and
\item $\det A_{ij}=A_{11}A_{22}-A_{12}^2>0$, where
\end{itemize}
\begin{gather}
\det A_{ij}(y_1,y_2)= (24ln-9m^2) \left(y_1^4+y_2^4\right)+(72lm-12mn)\left(y_1^3y_2+y_1y_2^3 \right) \label{det} \\ 
+(144l^2+18m^2-12n^2)y_1^2y_2^2. \notag
\end{gather}
\end{Lem}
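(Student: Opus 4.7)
The plan is to apply the standard Sylvester criterion for $2\times 2$ symmetric matrices and then carry out the algebraic expansion of the determinant.

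For the first equivalence, I would invoke the classical fact that a symmetric $2\times 2$ matrix is positive definite if and only if its $(1,1)$ entry is positive and its determinant is positive. This immediately reduces the lemma to computing $\det A_{ij} = A_{11}A_{22} - A_{12}^2$ explicitly in terms of $l,m,n$ and $y_1,y_2$, so that the condition $\det A_{ij} > 0$ can be recognized as a positivity statement on a quartic form in $(y_1,y_2)$.

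For the determinant formula, I would substitute the expressions
\begin{align*}
A_{11} &= 12l\, y_1^2 + 6m\, y_1 y_2 + 2n\, y_2^2, \\
A_{22} &= 2n\, y_1^2 + 6m\, y_1 y_2 + 12l\, y_2^2, \\
A_{12} &= 3m\, y_1^2 + 4n\, y_1 y_2 + 3m\, y_2^2,
\end{align*}
and expand. A term-by-term multiplication gives
\begin{align*}
A_{11} A_{22} &= 24ln\,(y_1^4+y_2^4) + (72lm+12mn)(y_1^3 y_2 + y_1 y_2^3) + (144l^2 + 36m^2 + 4n^2)\, y_1^2 y_2^2, \\
A_{12}^2 &= 9m^2\,(y_1^4+y_2^4) + 24mn\,(y_1^3 y_2 + y_1 y_2^3) + (18m^2 + 16n^2)\, y_1^2 y_2^2,
\end{align*}
so that subtraction yields exactly the coefficients $24ln - 9m^2$, $72lm - 12mn$, and $144l^2 + 18m^2 - 12n^2$ asserted in \eqref{det}. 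The symmetry of the coefficients of $y_1^4$ and $y_2^4$, and of $y_1^3 y_2$ and $y_1 y_2^3$, serves as a useful internal check, and reflects the invariance of $A$ under the swap $y_1\leftrightarrow y_2$.

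The only real obstacle is bookkeeping in the expansion, since nine cross products contribute to $A_{11}A_{22}$ and the coefficient of $y_1^2 y_2^2$ receives three separate contributions; I would organize the computation by powers of $y_1 y_2$ to avoid sign or index mistakes. Everything else is a direct application of Sylvester's criterion, so no deeper structural argument is needed.
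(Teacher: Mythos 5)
Your proof is correct and is exactly the routine argument the paper implicitly relies on: the paper states Lemma \ref{lem1} without proof, treating it as an immediate consequence of Sylvester's criterion for $2\times 2$ symmetric matrices applied pointwise at each nonzero $v$, together with the direct expansion of $A_{11}A_{22}-A_{12}^2$. Your intermediate computations of $A_{11}A_{22}$ and $A_{12}^2$, and the resulting coefficients $24ln-9m^2$, $72lm-12mn$, and $144l^2+18m^2-12n^2$, all check out.
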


\subsection{Necessary conditions of positive definiteness}\label{nec} Suppose that the matrix $A_{ij}$ is positive definite and let us consider the following necessary conditions. 

\begin{itemize}
\item Condition $A_{11}>0$ is obviously equivalent to
\begin{equation} \label{posi01}
\left[
\begin{matrix}
y_1 & y_2
\end{matrix}
\right]
\left[
\begin{matrix}
12l & 3m\\ 3m & 2n
\end{matrix}
\right]
\left[
\begin{matrix}
y_1 \\ y_2
\end{matrix}
\right] > 0 
 \Longleftrightarrow 
\left\{\begin{alignedat}{1}
12l &> 0 \\
24 ln - 9 m^2 &>0
\end{alignedat}\right.
 \Longleftrightarrow 
\left\{\begin{alignedat}{1}
\mathbf{l}\ &\mathbf{> 0} \\
\mathbf{8 ln}\  &\mathbf{>3m^2}
\end{alignedat}\right..
\end{equation}
Especially, $\mathbf{n>0}$ and condition \eqref{posi01} automatically provides the positive definiteness of $A_{ij}$ along the coordinate axes because of
$$\det A_{ij}(t, 0)=\det A_{ij}(0,t)=(24ln-9m^2)t^4.$$

\item Using the diagonal and the antidiagonal directions, we must have
$$\det A_{ij}(t,\pm t)=12(6l-n)(2l \pm 2m+n)t^4>0$$
and a simple addition implies that $12(6l-n)(4l+2n)t^4>0$. Since $l$ and $n$ are strictly positive by \eqref{posi01}, $6l-n >0$, i.e. $\mathbf{6l>n}$ must hold. Therefore we have 
\begin{equation} \label{OKVcond}
\mathbf{l > 0} \quad \textrm{and} \quad \mathbf{\frac{3m^2}{8l} < n < 6l}
\end{equation}
as bounds for the coefficient $n(x)$. Further necessary conditions can be given by using the positivity along the diagonal and the antidiagonal directions in a separated way:  
\begin{equation}
\label{pluscond}
\left\{\begin{alignedat}{1}
2l+2m+n &> 0 \\
2l-2m+n &> 0
\end{alignedat}\right.
\Longleftrightarrow
\left\{\begin{alignedat}{1}
2l+n &> -2m \\
2l+n &> 2m
\end{alignedat}\right.
\Longleftrightarrow
\mathbf{2\left|m\right| < 2l+n}.
\end{equation}
Combining the previous conditions we also have that
\begin{equation}\label{concond}
2\left|m\right| < 2l+n < 2l+6l=8l \ \Longrightarrow \mathbf{\left|m\right|<4l}.                                               
\end{equation} 
\end{itemize}

In the paper \cite{OKV}, the authors presented condition \eqref{OKVcond} to support theoretical results for such a metric to be an easy-to-compute example for a generalized Berwald metric. The results related to the special metric obviously hold, but condition \eqref{OKVcond} is not equivalent to the positive definiteness of the metric. Indeed, the coefficients $l=4$, $m=6$ and $n=4$ show that condition \eqref{OKVcond} can be satisfied without satisfying the necessary condition \eqref{pluscond} for the positive definiteness of the matrix $A_{ij}$. Using a continuity argument, we can also construct indefinite polynomial metrics with non-constant coefficients: $l(x)\approx 4$,  $m(x)\approx 6$ and $n(x)\approx 4$.

\subsection{Some examples} In most cases of the following examples we use constant coefficients but a continuity argument  allows us to extend the definiteness property for a polynomial metric with non-constant coefficients. 

\begin{Exm}{\emph{The coefficients $l=1$, $m=\sqrt{7}$ and $n=3$ show that condition \eqref{OKVcond} can be satisfied without satisfying the necessary condition \eqref{pluscond} for the positive definiteness of the matrix $A_{ij}$.}}
\end{Exm}

\begin{Exm} \label{exe1} {\emph{Setting $l=4$, $m=6$ and $n=5$, it can easily be seen that all the necessary conditions \eqref{OKVcond} and \eqref{pluscond} are satisfied. By substituting into \eqref{det}, the determinant of the Hessian is
\[ \det A_{ij}(y_1,y_2)=
156(y_1^4+y_2^4)+1368(y_1^3y_2+y_1y_2^3)+2652y_1^2y_2^2 \]
and  we do not always get a positive value: $\det A_{ij}(1,-2)=-420$.}}
\end{Exm}

\begin{Exm} \label{exe2} {\emph{Let us choose $l=1$, $m=2$ and $n=3$. Although all the necessary conditions \eqref{OKVcond} and \eqref{pluscond} are satisfied, the previous example shows that they do not form a sufficient system of conditions for the positive definiteness in general. Therefore we have to check inequality $\det A_{ij}>0$ in a direct way ($A_{11}>0$ is given by $l > 0$ and $8ln > 3m^2$):
$$\det A_{ij} =36(y_1^4+y_2^4)+72(y_1^3y_2+y_1y_2^3)+108y_1^2y_2^2=36 \left( y_1^2+y_1y_2+y_2^2 \right)^2,
$$
where the quadratic form $y_1^2+y_1y_2+y_2^2$ is obviously positive definite. So is the matrix $A_{ij}$.}}
\end{Exm}

\begin{Exm} \label{exe3} {\emph{Let us set $l=1$, $m=1$ and $n=2$. Since the necessary conditions \eqref{OKVcond} and \eqref{pluscond}  are satisfied, the positive definiteness depends on the positivity of $\det A_{ij}$ along directions such that $y_1y_2\neq 0$. Plugging into \eqref{det} yields
\[ \det A_{ij}(y_1,y_2)=39(y_1^4+y_2^4)+48(y_1^3y_2+y_1y_2^3)+114y_1^2y_2^2, \]
about which neither positivity nor reducibility is obvious to determine. Dividing by $3y_1y_2$ and introducing the new variable $z:=\dfrac{y_1}{y_2}+\dfrac{y_2}{y_1}$, we have that 
$$13 \left[\left(\dfrac{y_1}{y_2}\right)^2+\left(\dfrac{y_2}{y_1} \right)^2\right]+16\left[\dfrac{y_1}{y_2}+\dfrac{y_2}{y_1} \right] +38
=13(z^2-2)+16z+38 = 13z^2+16z+12.$$
It is easy to see that the polynomial is positive everywhere because it has a positive main coefficient and negative discriminant $-368$. So is $ \det A_{ij}$, making $A_{ij}$ positive definite.}}
\end{Exm}

\begin{Exm}\label{exe7}
{\emph{(Positive definite generalized Berwald surfaces) According to the characterization of generalized Berwald surfaces with locally symmetric fourth root metrics in \cite{OKV}, the choice $n(x)\approx 1$ (i.e. $n(x)$ is close enough to one), $l(x)=1/4$ and
$$m(x)=\sqrt{n(x)-1/2+\frac{1}{36}(2n(x)-3)^2}$$
gives a generalized Berwald surface. Using a continuity argument it is enough to evaluate the principal minors of the matrix $A_{ij}$ under the choice $l=1/4$, $n=1$ and 
$$m=\sqrt{1-1/2+\frac{1}{36}(2\cdot 1-3)^2}=\sqrt{19}/6$$
to see the positive definiteness:}} 
$$A_{11}= \left[
\begin{matrix}
y^1 & y^2
\end{matrix}
\right]
\left[
\begin{matrix}
3 & \sqrt{19}/2\\ \sqrt{19}/2 & 2
\end{matrix}
\right]
\left[
\begin{matrix}
y^1 \\ y^2
\end{matrix}
\right],$$ 
{\emph{where}}  
$$\left[
\begin{matrix}
3 & \sqrt{19}/2\\ \sqrt{19}/2 & 2
\end{matrix}
\right]$$
{\emph{is a positive definite matrix, 
$$\det A_{ij}(t,0)=\det A_{ij}(0, t)=\frac{5}{4}t^4$$
and
$$\det A_{ij}(y^1,y^2)=(y^2)^4\left(\frac{5}{4}\left(\frac{y^1}{y^2}\right)^4+\sqrt{19}\left(\frac{y^1}{y^2}\right)^3+\frac{13}{2}\left(\frac{y^1}{y^2}\right)^2+\sqrt{19}\left(\frac{y^1}{y^2}\right)+\frac{5}{4}\right)>0$$
because 
$$\frac{5}{4}t^4+\sqrt{19}t^3+\frac{13}{2}t^2+\sqrt{19}t+\frac{5}{4}$$
is an irreducible palindromic polynomial (see subsection \ref{palind}).}}
\end{Exm}

In what follows we are going to apply the method of palindromic polynomials (see Examples \ref{exe3} and \ref{exe7}) in general. 

\subsection{The definiteness polynomial}\label{palind}

As \eqref{posi01} implies the positive definiteness of $A_{ij}$ along the coordinate axes, it is enough to investigate the case of $y_1y_2\neq 0$. After dividing \eqref{det} by $3y_1^2y_2^2$, $\det A_{ij}>0$ is equivalent  to
\[ (8ln-3m^2) \left[\left(\dfrac{y_1}{y_2}\right)^2+\left(\dfrac{y_2}{y_1} \right)^2\right]+(24lm-4mn)\left[\dfrac{y_1}{y_2}+\dfrac{y_2}{y_1} \right] +48l^2+6m^2-4n^2 > 0. \]
Introducing the reciprocal sum $z:=\dfrac{y_1}{y_2}+\dfrac{y_2}{y_1}$ as a new variable, it follows that $\left|z\right|\geq 2$ and we can introduce the (local) \textbf{definiteness polynomial} 
\begin{align}
P(z)&= (8ln-3m^2) \left(z^2-2\right)+(24lm-4mn)z+48l^2+6m^2-4n^2, \notag \\
P(z)&= (8ln-3m^2)z^2+(24lm-4mn)z+48l^2+12m^2-4n^2-16ln \label{quadraticpol}
\end{align} 
of the metric. In terms of the definiteness polynomial, Lemma \ref{lem1} can be reformulated as follows.
 
\begin{Lem} \label{lem-posdefpol} $A_{ij}$ is positive definite if and only if
\begin{itemize}
\item \eqref{posi01} holds and
\item $P(z)>0$ for all values where $\left|z\right|\geq 2$, i.e. $P$ is irreducible over the reals \emph{(irreducible case)}
 or $P$ is reducible having roots in the open interval $\left]-2,2\right[$ \emph{(reducible case)}.
\end{itemize}
\end{Lem}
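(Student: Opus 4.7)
The plan is to reduce the statement to the characterization already given in Lemma \ref{lem1} and then translate the condition $\det A_{ij}>0$ into a statement about the quadratic polynomial $P(z)$. By Lemma \ref{lem1}, positive definiteness of the Hessian is equivalent to the conjunction of $A_{11}>0$ and $\det A_{ij}>0$ on the punctured tangent space. The first conjunct is \eqref{posi01} by the work already done in subsection \ref{nec}, so the task reduces to showing that $\det A_{ij}(y_1,y_2)>0$ for all $(y_1,y_2)\neq(0,0)$ is equivalent to $P(z)>0$ for all $z$ with $|z|\geq 2$.

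Next I would split the quadrant analysis into two regimes. Along the coordinate axes, $\det A_{ij}(t,0)=\det A_{ij}(0,t)=(24ln-9m^2)t^4$, which is strictly positive already by \eqref{posi01}, so these directions contribute nothing. For $y_1y_2\neq 0$ the factor $3y_1^2y_2^2>0$ is harmless, and dividing \eqref{det} by it yields, after the substitution $z=y_1/y_2+y_2/y_1$ and the identity $(y_1/y_2)^2+(y_2/y_1)^2=z^2-2$, exactly the polynomial $P(z)$ displayed in \eqref{quadraticpol}. The crucial observation is then that, as $(y_1,y_2)$ ranges over real pairs with $y_1y_2\neq 0$, the variable $z$ ranges over $\{z\in\rr:|z|\geq 2\}$: the AM-GM inequality applied to $|y_1/y_2|$ and $|y_2/y_1|$ gives $|z|\geq 2$, and for any prescribed $z$ with $|z|\geq 2$ one can solve the quadratic $t^2-zt+1=0$ (with $t=y_1/y_2$) for real $t$. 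This gives the equivalence $\det A_{ij}>0$ on the punctured plane $\iff$ $P(z)>0$ on $\{|z|\geq 2\}$.

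Finally I would dispose of the dichotomy stated in the lemma by a standard analysis of the quadratic $P$. Its leading coefficient is $8ln-3m^2$, which is strictly positive by \eqref{posi01}, so $P$ opens upward. Hence either $P$ has no real roots (irreducible case), in which case $P(z)>0$ for every real $z$ and the condition on $\{|z|\geq 2\}$ holds trivially, or $P$ has two real roots $r_1\leq r_2$ (reducible case), in which case $P(z)>0$ exactly on $\rr\setminus[r_1,r_2]$. Strict positivity on $\{|z|\geq 2\}$ is then equivalent to $[r_1,r_2]\subset{}]-2,2[$, i.e.\ to both roots lying in the open interval.

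The main obstacle, though really a matter of care rather than depth, will be the reducible case: one must insist on the \emph{open} interval $]-2,2[$ rather than the closed one, because a root at $\pm 2$ would yield $P(\pm 2)=0$ and correspond to an actual direction $(y_1,y_2)$ with $y_1=\pm y_2$ on which $\det A_{ij}$ vanishes, violating positive definiteness. Once this boundary subtlety is recorded, the two cases of Lemma \ref{lem-posdefpol} are just the two possibilities for an upward-opening real quadratic to be strictly positive on $\{|z|\geq 2\}$.
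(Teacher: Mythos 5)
Your proposal is correct and follows essentially the same route as the paper, which absorbs the argument for Lemma~\ref{lem-posdefpol} into the discussion of subsection~\ref{palind} (reduce to Lemma~\ref{lem1}, dispatch the axes with \eqref{posi01}, divide by $3y_1^2y_2^2$, substitute $z=y_1/y_2+y_2/y_1$ and note $|z|\geq 2$). The only thing you add beyond the paper is the explicit check that $z$ maps \emph{onto} $\{|z|\geq 2\}$ via solving $t^2-zt+1=0$, which the paper leaves implicit; this is a welcome clarification rather than a different approach.
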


In what follows, we suppose that necessary conditions \eqref{OKVcond} and \eqref{pluscond} hold. So does \eqref{concond}. Especially, $0<l$ and $\left|m\right|<4l$ (we will see that these are as tight as possible, and the additional conditions are given for the coefficient $n$). At first let us investigate the discriminant
\begin{align}
\Delta:&= (24lm-4mn)^2-4\left(8ln-3m^2\right)\left(48l^2+12m^2-4n^2-16ln\right) \label{disc}\\
 &= 16\left(9m^2-12ln-2n^2\right)\left(8l^2-4ln+m^2\right) =: 16\, \Delta_1 \, \Delta_2 \notag
\end{align}
of the definiteness polynomial.

\begin{Lem}[The factors of the discriminant] \label{lem-disc} Necessary conditions \eqref{OKVcond} and \eqref{pluscond} imply that  
\begin{equation} \label{Delta1-2-magn}
\Delta_1=9m^2-12ln-2n^2 \ < \ 8l^2-4ln+m^2 = \Delta_2.
\end{equation} 
Furthermore, sign conditions for $\Delta_1$ and $\Delta_2$ can be formulated in terms of lower or upper bounds for the coefficient $n$ in the following way:
\begin{align}
\Delta_1=9m^2-12ln-2n^2 \lessgtr 0 & \Longleftrightarrow  
  \dfrac{3}{2}\sqrt{4l^2+2m^2}-3l \lessgtr n \label{Delta1-ineq}\\[8pt] 
\Delta_2=8l^2-4ln+m^2 \lessgtr 0 & \Longleftrightarrow  
   \dfrac{8l^2+m^2}{4l} \lessgtr n. \label{Delta2-ineq}
\end{align}
\end{Lem}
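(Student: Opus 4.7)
The plan is to handle the magnitude comparison and the two sign criteria separately, since all three assertions reduce to elementary algebra in the variables $l,m,n$ under the sign constraints already established in \eqref{OKVcond} and \eqref{pluscond}.

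First, for the magnitude comparison \eqref{Delta1-2-magn}, the direct route is simply to compute the difference and factor. A short calculation yields
\[
\Delta_2-\Delta_1 = (8l^2-4ln+m^2)-(9m^2-12ln-2n^2) = 2(2l+n)^2 - 8m^2 = 2(2l+n-2m)(2l+n+2m).
\]
By the necessary condition \eqref{pluscond}, namely $2|m|<2l+n$, both factors $2l+n-2m$ and $2l+n+2m$ are strictly positive, so $\Delta_2-\Delta_1>0$. This is the shortest of the three parts and presents no real difficulty.

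Second, for \eqref{Delta2-ineq}, observe that $\Delta_2=8l^2-4ln+m^2$ is affine in $n$ with slope $-4l$. Since $l>0$ by \eqref{OKVcond}, we have $-4l<0$, so $\Delta_2$ is strictly decreasing in $n$, and the sign equivalence $\Delta_2\lessgtr 0 \Leftrightarrow n\gtrless (8l^2+m^2)/(4l)$ follows by solving the linear inequality directly.

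Third, for \eqref{Delta1-ineq}, view $\Delta_1=-2n^2-12ln+9m^2$ as a quadratic in $n$ with negative leading coefficient, hence an upward-opening parabola in $-n$ (equivalently downward-opening in $n$). Setting $\Delta_1=0$ and applying the quadratic formula to $2n^2+12ln-9m^2=0$ gives the two roots
\[
n_{\pm} = -3l \pm \tfrac{3}{2}\sqrt{4l^2+2m^2},
\]
of which $n_-<0$ always (because $l>0$) while $n_+=\tfrac{3}{2}\sqrt{4l^2+2m^2}-3l$. Since $\Delta_1$ is positive strictly between the two roots and negative outside, and since the condition $n>0$ from \eqref{posi01} automatically places us to the right of $n_-$, the relevant threshold is $n_+$: that is, $\Delta_1>0$ iff $n<n_+$, and $\Delta_1<0$ iff $n>n_+$. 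This gives precisely \eqref{Delta1-ineq}.

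The only subtlety worth flagging is in the third step: the quadratic in $n$ has two real roots, but only the positive one is a genuine sign-change threshold for the admissible range $n>0$. Once this is noted, everything reduces to bookkeeping; no deeper idea is needed beyond completing the square (for part one) and the quadratic formula (for part three), and the arithmetic simplification $\sqrt{72(2l^2+m^2)}=6\sqrt{4l^2+2m^2}$ which produces the form in which the bound is stated.
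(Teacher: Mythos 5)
Your proposal is correct and follows essentially the same route as the paper: the magnitude comparison reduces to factoring $\Delta_2-\Delta_1 = 2\bigl((2l+n)^2-4m^2\bigr)$ and invoking $2|m|<2l+n$, and the sign conditions are obtained by solving a linear (for $\Delta_2$) and a quadratic (for $\Delta_1$) inequality in $n$. The only cosmetic difference is that for \eqref{Delta1-ineq} the paper completes the square, writing $\Delta_1=9m^2-2(3l+n)^2+18l^2$ and using $3l+n>0$ to take the square root, whereas you apply the quadratic formula and discard the negative root via $n>0$; both are valid and equivalent in substance.
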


\begin{proof} \eqref{Delta1-2-magn} is equivalent to the necessary condition $2\left|m\right|<2l+n$ through
\begin{align*}
9m^2-12ln-2n^2  &< 8l^2-4ln+m^2  \\
-8l^2-8ln-2n^2 &< -8m^2 \\
4l^2+4ln+n^2 & > 4m^2 \\
2l+n & > 2\left|m\right|.
\end{align*}
\eqref{Delta1-ineq} and \eqref{Delta2-ineq} are obtained by equivalent rearrangements such as
$$\Delta_1=9m^2-12ln-2n^2=9m^2-2(3l+n)^2+18l^2 \lessgtr 0,$$
where $3l+n>0$. The second one \eqref{Delta2-ineq} is straightforward because of $l>0$. 
\end{proof}

We have already encountered quite a few bounds for the coefficient $n$; let's see how they compare to each other.

\begin{Lem}[The magnitude of bounds] \label{lem-magnbound} Necessary conditions \eqref{OKVcond} and \eqref{pluscond} imply that 
\begin{equation} \label{boundsforn}
0 \ \leq \ \dfrac{3m^2}{8l} \ \leq \ \dfrac{3}{2}\sqrt{4l^2+2m^2}-3l \ < \ \dfrac{8l^2+m^2}{4l} \ < \ 6l.
\end{equation}
\end{Lem}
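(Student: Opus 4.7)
The plan is to verify each link of the four-inequality chain separately, relying only on the two consequences $l>0$ and $|m|<4l$ of the hypotheses (the first from \eqref{OKVcond}, the second from \eqref{concond}). The two outermost links are essentially free: $0\leq 3m^2/(8l)$ holds because $l>0$ and $m^2\geq 0$, and $(8l^2+m^2)/(4l)<6l$ is equivalent (multiplying by $4l>0$) to $m^2<16l^2$, which is exactly $|m|<4l$.

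The two middle links require isolating the square root and squaring. For the inequality $3m^2/(8l)\leq \tfrac{3}{2}\sqrt{4l^2+2m^2}-3l$, I would move the $3l$ to the left to get $(3m^2+24l^2)/(8l)\leq \tfrac{3}{2}\sqrt{4l^2+2m^2}$; both sides are manifestly nonnegative, so squaring is legitimate. After clearing denominators and cancelling common terms, the claim reduces to $m^2(m^2-16l^2)\leq 0$, which follows directly from $|m|<4l$ (with equality precisely when $m=0$, which is consistent with the weak inequality appearing in \eqref{boundsforn}).

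For $\tfrac{3}{2}\sqrt{4l^2+2m^2}-3l<(8l^2+m^2)/(4l)$, I would similarly rearrange to $\tfrac{3}{2}\sqrt{4l^2+2m^2}<(20l^2+m^2)/(4l)$; both sides are positive, so I square and simplify. Here the right-hand side minus the left-hand side, after multiplying through by $16l^2$, collapses to the perfect square $(16l^2-m^2)^2$, which is strictly positive precisely because $|m|<4l$ forces $m^2\neq 16l^2$.

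There is no real obstacle: the whole argument is algebraic bookkeeping. The only thing one must check before squaring is that the isolated right-hand sides are nonnegative, which is automatic from $\sqrt{4l^2+2m^2}\geq 2l$ and $l>0$. The pleasant surprise is that in both nontrivial steps the discriminant-like expression factors as a perfect square controlled by $16l^2-m^2$, so that the single bound $|m|<4l$ drives every estimate in the chain.
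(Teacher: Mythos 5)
Your proposal is correct and follows essentially the same route as the paper: each link is isolated, cleared of denominators, squared (after checking nonnegativity), and reduced to an inequality controlled by $|m|<4l$. The only tiny slip is in your closing remark: in step (a) the expression reduces to $m^2(m^2-16l^2)\leq 0$, i.e.\ $m^2(4l-m)(4l+m)\geq 0$, which is not itself a perfect square (it can vanish at $m=0$), whereas only step (b) genuinely collapses to the perfect square $(16l^2-m^2)^2$ -- exactly as in the paper's computation.
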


\begin{proof} All the inequalities are consequences of the necessary condition $\left|m\right|<4l$ as follows. 
\begin{itemize} 
\item[(a)] $\hfill \begin{aligned}[t]
\dfrac{3m^2}{8l}  &\leq \dfrac{3}{2}\sqrt{4l^2+2m^2}-3l \hspace{1cm} \bigg/\cdot \dfrac{8}{3} l>0 \\
m^2  &\leq 4l\sqrt{4l^2+2m^2}-8l^2  \\
8l^2+m^2  &\leq 4l\sqrt{4l^2+2m^2} \\
64l^2+16l^2m^2+m^4 &\leq 64l^4+32l^2m^2 \\
0 &\leq 16l^2m^2-m^4 = m^2(4l+m)(4l-m).
\end{aligned} \hfill$ \vspace{8pt}

\item[(b)]  $\hfill \begin{aligned}[t]
\dfrac{3}{2}\sqrt{4l^2+2m^2}-3l &< \dfrac{8l^2+m^2}{4l} \hspace{1cm} \bigg/\cdot 4l>0 \\
6l\sqrt{4l^2+2m^2}-12l^2 &< 8l^2+m^2 \\
6l\sqrt{4l^2+2m^2} &< 20l^2+m^2 \\
144l^4+72l^2m^2 &< 400l^4+40l^2m^2+m^4 \\
0 &< 256l^4-32l^2m^2+m^4 = (4l+m)^2(4l-m)^2.
\end{aligned} \hfill$ \vspace{8pt}

\item[(c)] $\hfill \begin{aligned}[t]
\dfrac{8l^2+m^2}{4l} &< 6l \hspace{1cm} \bigg/\cdot 4l>0 \\
8l^2+m^2 &< 24l^2 \\
m^2 &< 16l^2 \\
\left|m\right| &< 4l. 
\end{aligned} \hfill$ 
\end{itemize}
\end{proof}

\subsection{The irreducible case} Irreducibility of the definiteness polynomial is equivalent to the discriminant being $\Delta=\Delta_1\, \Delta_2<0$. By Lemma \ref{lem-disc}, since $\Delta_1<\Delta_2$, this is only possible in case of
\[
\left.
\begin{aligned}
 \Delta_1=9m^2-12ln-2n^2 &< 0 \\[16pt]
 \Delta_2=8l^2-4ln+m^2 &> 0
\end{aligned}
\right\}
\ \Longleftrightarrow \
\left\{
\begin{aligned}
 \dfrac{3}{2}\sqrt{4l^2+2m^2}-3l &< n \\
 \dfrac{8l^2+m^2}{4l} &> n.
\end{aligned}
\right.
\]
Lemma \ref{lem-magnbound} shows that it is indeed a possible case (corresponding to case (b) in the inequality chain). So, we have the following result.

\begin{Lem}[The irreducible case] \label{lem-irredcase} Necessary conditions \eqref{OKVcond} and \eqref{pluscond} together with
\begin{equation} \label{irred}
\dfrac{3}{2}\sqrt{4l^2+2m^2}-3l < n < \dfrac{8l^2+m^2}{4l}
\end{equation} 
imply that the definiteness polynomial is irreducible over the reals, i.e. $A_{ij}$ is positive definite. 
\end{Lem}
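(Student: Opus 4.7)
The plan is to chain together the two preceding lemmas. The hypothesis on $n$ is precisely the pair of conditions that, via Lemma \ref{lem-disc}, characterize the regime $\Delta_1 < 0$ and $\Delta_2 > 0$, so the discriminant $\Delta = 16 \Delta_1 \Delta_2$ of the definiteness polynomial is strictly negative. Combined with a check that the leading coefficient of $P$ is positive, this yields $P(z) > 0$ everywhere, and Lemma \ref{lem-posdefpol} then delivers positive definiteness of $A_{ij}$.

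Concretely, I would first apply the equivalence \eqref{Delta1-ineq} to the lower bound $n > \tfrac{3}{2}\sqrt{4l^2+2m^2} - 3l$ to conclude $\Delta_1 < 0$, and the equivalence \eqref{Delta2-ineq} to the upper bound $n < \tfrac{8l^2+m^2}{4l}$ to conclude $\Delta_2 > 0$. Their product is strictly negative, so $\Delta < 0$ and the quadratic $P(z)$ has no real roots, hence is irreducible over $\rr$. Lemma \ref{lem-magnbound} (case (b)) already certifies that the open interval appearing in the hypothesis is nonempty and sits inside the range $\tfrac{3m^2}{8l} < n < 6l$ demanded by \eqref{OKVcond}, so no consistency issue arises between the new assumption on $n$ and the standing necessary conditions.

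Next I would verify that $P$ is a genuine quadratic with positive leading coefficient: the inequality $8ln > 3m^2$ recorded in \eqref{posi01} (part of \eqref{OKVcond}) says precisely that $8ln - 3m^2 > 0$. A quadratic with positive leading coefficient and negative discriminant is strictly positive on all of $\rr$, so in particular $P(z) > 0$ for every $z$ with $|z| \geq 2$. This is the second bullet in Lemma \ref{lem-posdefpol}; the first bullet, \eqref{posi01}, is already part of our hypothesis via \eqref{OKVcond}. Therefore $A_{ij}$ is positive definite.

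There is no substantive obstacle here; the argument is a bookkeeping assembly of the previous lemmas. The only care needed is to keep the inequalities strict throughout — in particular to use $8ln > 3m^2$ rather than $\geq$, so that $P$ is genuinely quadratic and its positivity follows unconditionally from the negative-discriminant test, with no separate treatment of a degenerate leading coefficient.
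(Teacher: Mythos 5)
Your proof is correct and takes essentially the same approach as the paper: apply the equivalences \eqref{Delta1-ineq} and \eqref{Delta2-ineq} from Lemma \ref{lem-disc} to translate the bounds \eqref{irred} into $\Delta_1<0$ and $\Delta_2>0$, conclude $\Delta<0$ so $P$ is irreducible, note the positive leading coefficient from \eqref{posi01}, and invoke Lemma \ref{lem-posdefpol} for positive definiteness. The paper's brief discussion before the lemma adds a remark that, since $\Delta_1<\Delta_2$, the regime $\Delta_1<0<\Delta_2$ is the only way to have $\Delta<0$ — useful context for the later reducible case but not needed for the one-directional implication you are proving — and otherwise matches your bookkeeping, including the appeal to Lemma \ref{lem-magnbound} for consistency of the interval.
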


\subsection{The reducible case} If $P$ is reducible, it must have its roots inside the open interval $\left]-2,2\right[$ for $A_{ij}$ to be positive definite. As \eqref{posi01} implies that $P$ has a positive main coefficient (and thus a monotonically increasing derivative), the condition on the roots is equivalent to
\begin{itemize}
\item[(a)] $P(\pm 2)>0$ and
\item[(b)] the slopes of $P$ flipping sign between $\pm 2$, i.e. $P'(-2)<0<P'(2)$.
\end{itemize}
Since $P(\pm 2)=\det A_{ij}(1, \pm 1)=12(6l-n)(2l\pm 2m+n)$, (a) is implied by the necessary conditions \eqref{OKVcond} and \eqref{pluscond}. Using condition $\left|m\right|<4l$ again,
\begin{align*}
P'(-2)=2(8ln-3m^2)(-2)+24lm-4mn &< 0 \\
-8ln+3m^2+6lm-mn &< 0 \\
 3m(m+2l) &< n(8l+m) & /\div (8l+m)>0 \\
 \dfrac{3m(m+2l)}{8l+m} &< n \\[6pt]
 P'(2)=2(8ln-3m^2)2+24lm-4mn &> 0 \\
8ln-3m^2+6lm-mn &> 0 \\
n(8l-m) &> 3m(m-2l) & /\div (8l-m)>0 \\
 n &> \dfrac{3m(m-2l)}{8l-m} 
\end{align*}
So, the slope conditions are equivalent to lower bounds for the coefficient $n$. About their magnitude, we can say the following. 

\begin{Lem}[The magnitude of bounds II] \label{label-bounds2} Necessary conditions \eqref{OKVcond} and \eqref{pluscond} imply that the slope conditions are  equivalent to the following lower bounds for the coefficient $n$:
\begin{equation}
\left.\begin{aligned}
\textrm{if } m \leq 0: \ &  \dfrac{3m(m-2l)}{8l-m} \\
\textrm{if } m \geq 0: \ &  \dfrac{3m(m+2l)}{8l+m} \\
\end{aligned}\right\} <n.
\end{equation}
Denoting the value of this bound by $\bigstar$, it fits in the inequality chain of Lemma \ref{lem-magnbound} as
\begin{equation} \label{boundsforn2}
0 \ \leq \ \dfrac{3m^2}{8l} \ \leq \ \dfrac{3}{2}\sqrt{4l^2+2m^2}-3l \ \leq \ \bigstar \ < \dfrac{8l^2+m^2}{4l} \ < \ 6l.
\end{equation} 
\end{Lem}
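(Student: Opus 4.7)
The plan is first to identify which of the two slope bounds is binding (giving the definition of $\bigstar$), and then to splice $\bigstar$ into the inequality chain already established in Lemma \ref{lem-magnbound}.

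For the first task, the computation immediately preceding the lemma shows that the slope conditions $P'(-2)<0<P'(2)$ are equivalent to the two lower bounds $n>\frac{3m(m+2l)}{8l+m}$ and $n>\frac{3m(m-2l)}{8l-m}$, so $n$ must exceed whichever of them is larger. I would compute the difference
\[
\frac{3m(m+2l)}{8l+m}-\frac{3m(m-2l)}{8l-m}=\frac{6m(4l-m)(4l+m)}{64l^2-m^2},
\]
and use \eqref{concond} to observe that $4l\pm m>0$ and $64l^2-m^2>0$, so the difference has the same sign as $m$. This yields exactly the case distinction in the statement.

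For the chain \eqref{boundsforn2}, Lemma \ref{lem-magnbound} already supplies every inequality except $\tfrac{3}{2}\sqrt{4l^2+2m^2}-3l\leq\bigstar$ and $\bigstar<\tfrac{8l^2+m^2}{4l}$. Observe that $\sqrt{4l^2+2m^2}$ and $\tfrac{8l^2+m^2}{4l}$ depend only on $|m|$, while the two formulas defining $\bigstar$ swap under $m\mapsto -m$; hence without loss of generality I may take $m\geq 0$ and use $\bigstar=\frac{3m(m+2l)}{8l+m}$.

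The upper inequality $\bigstar<\frac{8l^2+m^2}{4l}$, after clearing the positive denominators $4l$ and $8l+m$, reduces to $64l^3-16l^2m-4lm^2+m^3>0$; grouping as $m^2(m-4l)-16l^2(m-4l)$ gives the factorisation $(m-4l)^2(m+4l)$, strictly positive by $|m|<4l$. The lower inequality $\tfrac{3}{2}\sqrt{4l^2+2m^2}-3l\leq\bigstar$ is more delicate: I would isolate the radical as $(8l+m)\sqrt{4l^2+2m^2}\leq 2m^2+6lm+16l^2$, where both sides are positive (the right side has negative discriminant as a quadratic in $m$, and $8l+m>0$), then square and simplify. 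The main obstacle is controlling the expansion in this squaring step; the payoff is the clean factorisation of the difference of the two squared sides as $2m(m-4l)^2(m+4l)$, which is non-negative for $m\geq 0$ and vanishes only at $m=0$. Anticipating this factorisation is what keeps the algebra tractable.
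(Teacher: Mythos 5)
Your proof is correct and takes essentially the same route as the paper's: comparing the two slope bounds to find the binding one (you compute their difference, the paper cross-multiplies, but both reduce to the sign of $m$), then reducing the two new inequalities in the chain to the factorisations $2m(4l+m)(4l-m)^2\geq 0$ and $(4l+m)(4l-m)^2>0$ under the necessary condition $|m|<4l$. Your explicit symmetry observation ($m\mapsto -m$ swaps the two $\bigstar$-formulas and fixes the other bounds) is a nice justification of the paper's unstated remark that "the case $m\leq 0$ is analogous."
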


\begin{proof} Comparing the bounds leads to
\begin{align*}
\dfrac{3m(m-2l)}{8l-m} &\lessgtr \dfrac{3m(m+2l)}{8l+m} & \big/\cdot (8l-m)(8l+m)>0 \\
3m(m-2l)(8l+m) &\lessgtr 3m(m+2l)(8l-m) \\
-m(16l^2-m^2) &\lessgtr m(16l^2-m^2) & /\div (16l^2-m^2)>0 \\
-m &\lessgtr m,
\end{align*}
giving the first formula of the Lemma. For the second one, we need to prove two inequalities. Both of them follow from the necessary condition $\left|m\right|<4l$. We only consider the case $m\geq 0$ (the case $m\leq 0$ is analogous).

\begin{enumerate}
\item[(d)] $\hfill \begin{aligned}[t]
\dfrac{3}{2}\sqrt{4l^2+2m^2}-3l &\leq \dfrac{3m(m+2l)}{8l+m} =\bigstar \hspace{1cm} \bigg/\cdot \dfrac{2}{3}(8l+m)>0 \\
\sqrt{4l^2+2m^2}(8l+m) &\leq 2m(m+2l)+2l(8l+m) \\
\sqrt{4l^2+2m^2}(8l+m) &\leq 16l^2+6lm+2m^2\\
256l^4+64l^3m+132l^2m^2+32lm^3+2m^4 &\leq 256l^4+192l^3m+100l^2m^2+24lm^3+4m^4 \\
0 &\leq 128l^3m-32l^2m^2-8lm^3+2m^4 \\
0 &\leq 2m(4l+m)(4l-m)^2.
\end{aligned} \hfill$ \vspace{8pt}
\item[(e)] 
$\hfill \begin{aligned}[t]
\bigstar=\dfrac{3m(m+2l)}{8l+m} &< \dfrac{8l^2+m^2}{4l} \hspace{1cm} /\cdot 4l(8l+m)>0 \\
3m(m+2l)4l&<(8l^2+m^2)(8l+m) \\
24l^2m+12lm^2 &< 64l^3+8l^2m+8lm^2+m^3 \\
0 &< 64l^3-16l^2m-4lm^2+m^3 \\
0 &< (4l+m)(4l-m)^2
\end{aligned} \hfill$ 
\end{enumerate} 
\end{proof}

Now let's discuss the conditions of the reducibility of $P$, i.e. the discriminant being $\Delta=\Delta_1 \, \Delta_2 \geq 0$. Using Lemma \ref{lem-disc}, this can happen if $0 \leq \Delta_1 < \Delta_2$ or $\Delta_1 < \Delta_2 \leq 0$.
\begin{itemize}
\item In the first case, by Lemma \ref{lem-disc} we have
\begin{align*}
\Delta_1=9m^2-12ln-2n^2 \geq 0 & \Longleftrightarrow  
  \dfrac{3}{2}\sqrt{4l^2+2m^2}-3l \geq n \\[8pt] 
\Delta_2=8l^2-4ln+m^2 > 0 & \Longleftrightarrow  
   \dfrac{8l^2+m^2}{4l} > n.
\end{align*}
Combining the first one (which is sharper) with $\bigstar$, we get
\[ \bigstar < n \leq \dfrac{3}{2}\sqrt{4l^2+2m^2}-3l,\]
which contradicts the inequality chain from \eqref{boundsforn2}. Therefore this case can never happen.

\item In the second case, by Lemma \ref{lem-disc} we have
\begin{align*}
\Delta_1=9m^2-12ln-2n^2 < 0 & \Longleftrightarrow  
  \dfrac{3}{2}\sqrt{4l^2+2m^2}-3l < n \\[8pt] 
\Delta_2=8l^2-4ln+m^2 \leq 0 & \Longleftrightarrow  
   \dfrac{8l^2+m^2}{4l} \leq n.
\end{align*}
\end{itemize}
Combining them with $\bigstar$ (and the upper bound $n<6l$), the bounds are
\[ \dfrac{8l^2+m^2}{4l} \leq n < 6l. \]
Thus we have proved the following.

\begin{Lem}[The reducible case] \label{lem-redcase} Necessary conditions \eqref{OKVcond} and \eqref{pluscond} together with
\begin{equation} \label{red}
 \dfrac{8l^2+m^2}{4l} \leq n < 6l
 \end{equation}
imply that the definiteness polynomial is reducible over the reals with roots inside the  open interval $\left]-2, 2\right[$, i.e. $A_{ij}$ is positive definite. 
\end{Lem}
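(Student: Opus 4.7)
The plan is to verify both parts of Lemma~\ref{lem-posdefpol} for the reducible case under the hypothesis $\frac{8l^2+m^2}{4l} \leq n < 6l$ combined with the necessary conditions \eqref{OKVcond} and \eqref{pluscond}: first, that the definiteness polynomial $P$ has real roots; second, that both roots lie strictly inside $]-2,2[$.

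I would begin with reducibility via Lemma~\ref{lem-disc}. The assumption $n \geq \frac{8l^2+m^2}{4l}$ is, by the equivalence \eqref{Delta2-ineq}, precisely $\Delta_2 \leq 0$; combined with the strict inequality \eqref{Delta1-2-magn}, this forces $\Delta_1 < \Delta_2 \leq 0$. Hence the discriminant $\Delta = 16\,\Delta_1\,\Delta_2 \geq 0$ is nonnegative, so $P$ has real roots.

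For the localization of the roots, note that the leading coefficient $8ln-3m^2$ is positive by \eqref{posi01}. A real-root quadratic with positive leading coefficient has both roots strictly inside $]-2,2[$ if and only if $P(\pm 2) > 0$ and $P'(-2) < 0 < P'(2)$ (the slope condition ensures that the vertex, and hence both roots, fall between $-2$ and $2$). The values $P(\pm 2) = \det A_{ij}(1,\pm 1) = 12(6l-n)(2l \pm 2m + n)$ are positive because $n < 6l$ by hypothesis and $2l + n > 2|m|$ by \eqref{pluscond}. For the slope conditions, the computations immediately preceding the lemma reduce $P'(-2) < 0$ and $P'(2) > 0$ to strict lower bounds on $n$, whose larger value is denoted $\bigstar$ in Lemma~\ref{label-bounds2}; the chain \eqref{boundsforn2} then supplies $\bigstar < \frac{8l^2+m^2}{4l} \leq n$, so both slope conditions hold strictly.

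Assembling these ingredients, Lemma~\ref{lem-posdefpol} gives the positive definiteness of $A_{ij}$. The only substantive point is the strict inequality $\bigstar < \frac{8l^2+m^2}{4l}$, which is precisely step~(e) of the proof of Lemma~\ref{label-bounds2}; so in practice the present lemma amounts to gathering together the results already established in the preceding discussion rather than performing fresh computation.
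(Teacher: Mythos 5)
Your proposal is correct and takes essentially the same route as the paper: reducibility follows from \eqref{Delta2-ineq} combined with \eqref{Delta1-2-magn}, and root localization follows from $P(\pm 2)>0$ (via \eqref{OKVcond}, \eqref{pluscond}) together with the slope bounds $\bigstar < \frac{8l^2+m^2}{4l} \leq n$ from Lemma~\ref{label-bounds2}. The paper reaches the lemma through a somewhat discursive case analysis of $\Delta \geq 0$; your proof is a cleaner forward restatement of the same reasoning.
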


\subsection{Equivalent characterization of positive definiteness} Combining (and strengthening) the results of Lemmas \ref{lem-posdefpol}, \ref{lem-irredcase} and \ref{lem-redcase}, the main theorem of this section is the following.

\begin{Thm}[Positive definiteness of locally symmetric fourth root metrics] \label{2dmaintheorem} The fourth root $F$ of a smooth, pointwise polynomial function
\[ A(x,y)=l(x) \left[y_1^4+y_2^4\right] + m(x) \left[y_1^3y_2+y_1y_2^3 \right] + n(x) y_1^2y_2^2 \]
is a \emph{(}positive definite\emph{)} Finsler metric if and only if 
\begin{equation} \label{nbounds}
\boxed{\dfrac{3}{2}\sqrt{4l^2+2m^2}-3l < n < 6l.}
\end{equation}
\end{Thm}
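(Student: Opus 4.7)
The plan is to repackage Lemmas \ref{lem-posdefpol}, \ref{lem-irredcase}, and \ref{lem-redcase} into a single clean biconditional. The key structural observation is that the compact bound \eqref{nbounds} secretly encodes all the preceding necessary conditions \eqref{OKVcond} and \eqref{pluscond}, while the two sub-cases (irreducible vs.\ reducible definiteness polynomial) exhaustively cover the range $\frac{3}{2}\sqrt{4l^2+2m^2}-3l<n<6l$. Together with Lemma \ref{lemma:2} (for sufficiency) and Lemma \ref{lemma:1} (for necessity), this closes the theorem.

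For sufficiency I assume \eqref{nbounds} and first verify \eqref{OKVcond} and \eqref{pluscond}. Positivity $l>0$ is immediate because the lower bound already exceeds $-3l$, so $-3l<6l$ forces $l>0$; squaring $\frac{3}{2}\sqrt{4l^2+2m^2}-3l<6l$ yields $|m|<4l$; and inequality (a) of Lemma \ref{lem-magnbound} supplies $n>\frac{3m^2}{8l}$ for free. The only nontrivial reduction is $2|m|<2l+n$: from the lower bound in \eqref{nbounds} I have $2l+n>\frac{3}{2}\sqrt{4l^2+2m^2}-l$, and the algebraic identity
$$\frac{9}{4}(4l^2+2m^2)-(2|m|+l)^2=\frac{1}{2}(4l-|m|)^2\geq 0$$
upgrades this to $2l+n>2|m|$. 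With all necessary conditions in force, I split on whether $n<\frac{8l^2+m^2}{4l}$ (Lemma \ref{lem-irredcase} applies, irreducible case) or $n\geq \frac{8l^2+m^2}{4l}$ (Lemma \ref{lem-redcase} applies, reducible case); either way $A_{ij}$ is positive definite, so $F=\sqrt[4]{A}$ is a Finsler metric by Lemma \ref{lemma:2}.

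For necessity, Lemma \ref{lemma:1} gives positive definiteness of $A_{ij}$, whence the conditions of subsection \ref{nec} hold and, in particular, $n<6l$. For the lower bound I exploit the dichotomy of Lemma \ref{lem-posdefpol}: if the definiteness polynomial $P$ is irreducible then $\Delta=16\Delta_1\Delta_2<0$, and $\Delta_1<\Delta_2$ (Lemma \ref{lem-disc}) forces $\Delta_1<0$, which is exactly $n>\frac{3}{2}\sqrt{4l^2+2m^2}-3l$ via \eqref{Delta1-ineq}; if $P$ is reducible then its roots must lie in $]-2,2[$, which forces $\bigstar<n$ through Lemma \ref{label-bounds2}, and the argument preceding Lemma \ref{lem-redcase} then rules out the subcase $\Delta_1\geq 0$, leaving $\Delta_2\leq 0$, i.e.\ $n\geq \frac{8l^2+m^2}{4l}>\frac{3}{2}\sqrt{4l^2+2m^2}-3l$ by the inequality chain \eqref{boundsforn2}. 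The main obstacle is precisely the absorption of \eqref{pluscond} into the single bound \eqref{nbounds}: this rests on the quadratic identity displayed above, the algebraic coincidence that allows $l>0$, $3m^2<8ln$ and $2|m|<2l+n$ to be collapsed into a single two-sided bound on $n$ and gives the theorem its compact final form.
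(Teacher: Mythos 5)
Your proof is correct and follows the same overall architecture as the paper's: combine Lemmas \ref{lem-posdefpol}, \ref{lem-irredcase} and \ref{lem-redcase} to establish that positive definiteness of $A_{ij}$ is equivalent to the necessary conditions \eqref{OKVcond}, \eqref{pluscond} together with \eqref{nbounds}, and then show that \eqref{nbounds} alone implies all of the necessary conditions. The one genuine departure is in the absorption of \eqref{pluscond}: the paper derives $m^2 < \tfrac{1}{9}(12ln+2n^2)$ from $\Delta_1<0$ (the lower bound), separately derives $\tfrac{1}{9}(12ln+2n^2) < \tfrac{1}{4}(2l+n)^2$ from $n<6l$ (the upper bound), and chains the two; you instead exhibit the identity
\[
\frac{9}{4}\bigl(4l^2+2m^2\bigr)-\bigl(2\left|m\right|+l\bigr)^2=\frac{1}{2}\bigl(4l-\left|m\right|\bigr)^2\geq 0,
\]
which gives $\tfrac{3}{2}\sqrt{4l^2+2m^2}\geq 2\left|m\right|+l$ and hence $2l+n>2\left|m\right|$ directly from the lower bound of \eqref{nbounds}, never invoking $n<6l$ at that step. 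Similarly you obtain $n>\tfrac{3m^2}{8l}$ by appealing to inequality (a) of Lemma \ref{lem-magnbound} rather than the paper's re-derivation through $\Delta_1<0$ and $n<6l$. Your route is more economical and makes explicit that, once $l>0$ and $\left|m\right|<4l$ are in hand, the lower bound of \eqref{nbounds} by itself forces \eqref{pluscond}; both derivations are valid.
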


\begin{proof} By Lemma \ref{lemma:1} and Lemma \ref{lemma:2},  $F$ is a (positive definite) Finsler metric if and only if the Hessian $A_{ij}$ is positive definite at all nonzero elements $\displaystyle{v \in \pi^{-1}(U)\subseteq TM}$. By Lemma \ref{lem-posdefpol}, Lemma \ref{lem-irredcase} and \ref{lem-redcase}, this is equivalent to the necessary conditions \eqref{OKVcond} and \eqref{pluscond}  together with \eqref{nbounds}. We are going to show that \eqref{nbounds} implies all of the necessary conditions.

$n<6l$ is contained explicitly in \eqref{nbounds}. Adding $3l$ to the inequality chain 
\[ 0 \leq \dfrac{3}{2}\sqrt{4l^2+2m^2} < n+3l < 9l, \]
implying $l>0$ and $n+3l>0$. Therefore the lower inequality of \eqref{nbounds} is equivalent to $\Delta_1<0$ as we have seen in the proof of \eqref{Delta1-ineq} (see Lemma \ref{lem-disc}) under $l>0$ and $n+3l>0$. Using $n<6l$, we get
\begin{gather*}
\Delta_1=9m^2-12ln-2n^2 < 0 \\
9m^2 < 12ln+2n^2 =2n(6l+n) < 2n(6l+6l) = 24ln \\
3m^2 < 8ln.
\end{gather*}
Rearranging $\Delta_1<0$ it follows that 
\begin{equation}\label{mabslow}
m^2 < \dfrac{1}{9} \left( 12ln+2n^2 \right).
\end{equation}
From the upper part of \eqref{nbounds},
\begin{align}
n &< 6l \notag \\
0 &< (6l-n)^2=36l^2-12ln+n^2 \notag\\
48ln+8n^2 &<36l^2+36ln+9n^2 \notag\\
4(12ln+2n^2) &< 9(2l+n)^2 \notag\\
\dfrac{1}{9}\left(12ln+2n^2 \right) &< \dfrac{1}{4}(2l+n)^2 \label{mabsup}
\end{align}
Combining \eqref{mabslow} and \eqref{mabsup} yields
\[ m^2 < \dfrac{1}{4}(2l+n)^2 \ \Longleftrightarrow \ 2\left|m\right| < 2l+n \]
which completes the list of necessary conditions we have used throughout our investigations.
\end{proof}

\subsection{Constructing examples} From our results, it is straightforward how to construct a locally symmetric fourth root metric (locally): choose the coefficients $l,m,n$ of $A=F^4$ in \eqref{poly2} such that they satisfy
\[ \dfrac{3}{2}\sqrt{4l^2+2m^2}-3l < n < 6l. \]
The following table contains the possible intervals for the values of $n$ after choosing $l$ and $m$ as some small integers:
\begin{center} \renewcommand\arraystretch{1.2}
\begin{tabular}{|c||c|c|c|c|}
\hline 
\diagbox{$|m|$}{$l$} & 1 & 2 & 3 & 4 \\ 
\hline  \hline
0 & $\left]0,6\right[$ & $\left]0,12\right[$ & $\left]0,18\right[$ & $\left]0,24\right[$ \\ 
\hline 
1 & $\left]0.67,6\right[$ & $\left]0.36,12\right[$ & $\left]0.25,18\right[$ & $\left]0.19,24\right[$ \\ 
\hline 
2 & $\left]2.20,6\right[$ & $\left]1.35,12\right[$ & $\left]0.95,18\right[$ & $\left]0.73,24\right[$ \\ 
\hline 
3 & $\left]4.04,6\right[$ & $\left]2.75,12\right[$ & $\left]2.02,18\right[$ & $\left]1.58,24\right[$ \\ 
\hline 
4 &                    & $\left]4.39,12\right[$ & $\left]3.37,18\right[$ & $\left]2.70,24\right[$ \\ 
\hline 
5 &                    & $\left]6.19,12\right[$ & $\left]4.91,18\right[$ & $\left]4.02,24\right[$ \\ 
\hline 
6 &                    & $\left]8.07,12\right[$ & $\left]6.59,18\right[$ & $\left]5.49,24\right[$ \\ 
\hline 
7 &                    & $\left]10.02,12\right[$ & $\left]8.36,18\right[$ & $\left]7.09,24\right[$ \\ 
\hline 
8 &                    &                        & $\left]10.21,18\right[$ & $\left]8.78,24\right[$ \\ 
\hline 
9 &                    &                        & $\left]12.11,18\right[$ & $\left]10.55,24\right[$ \\ 
\hline 
10 &                    &                       & $\left]14.04,18\right[$ & $\left]12.37,24\right[$ \\ 
\hline 
11 &                    &                       & $\left]16.01,18\right[$ & $\left]14.24,24\right[$ \\ 
\hline 
\end{tabular} 
\end{center}
The lengths of these intervals shrink to zero as $m$ is chosen to be as large as possible, i.e. $m \nearrow 4l$. We also know by Lemma \ref{lem-irredcase} that the reducibility of the definiteness polynomial $P$ corresponds to the place of $n$ in the chain
\begin{equation} \label{bounds-red}
0 \ \leq \ \dfrac{3}{2}\sqrt{4l^2+2m^2}-3l \ < \ n_{\textrm{irred}}\ < \ \dfrac{8l^2+m^2}{4l} \ \leq \ n_{\textrm{red}} \ < \ 6l,
\end{equation}
the case $n_{\textrm{irred}}$ meaning $P$ is irreducible, and $n_{\textrm{red}}$ that $P$ is reducible. Note that, if $l$ and $m$ are given, the probability of a randomly chosen Finsler metric (value of $n$) yielding the reducible case depends only on the ratio of $m$ and $l$ because
\begin{align*}
&\dfrac{6l-\dfrac{8l^2+m^2}{4l}}{6l-\left(\dfrac{3}{2}\sqrt{4l^2+2m^2}-3l \right)} = \dfrac{\dfrac{2}{3}\,\dfrac{24l^2-8l^2-m^2}{4l}}{4l-\sqrt{4l^2+2m^2}+2l} \\
&= \dfrac{1}{6l} \, \dfrac{16l^2-m^2}{6l-\sqrt{4l^2+2m^2}} \, \dfrac{6l+\sqrt{4l^2+2m^2}}{6l+\sqrt{4l^2+2m^2}} \\
&=\dfrac{1}{6l} \, \dfrac{16l^2-m^2}{32l^2-2m^2} \left(6l+\sqrt{4l^2+2m^2}\right) \\
&= \dfrac{1}{2}+\dfrac{1}{12}\sqrt{4+2\left(\dfrac{m}{l} \right)^2}.
\end{align*}
Keeping the value of $m$ fixed and increasing the value of $l\to \infty$, this probability tends to $2/3$, i.e. for large values, the reducible case is about twice as likely. The critical value
$$n=\dfrac{8l^2+m^2}{4l}$$
corresponds the Riemannian case because $A(x,y)=F^4(x,y)$ is the square of a quadratic polynomial:
$$A(x,y)=l(x)\left(y_1^2+y_2^2+\dfrac{m(x)}{2l(x)} y_1 y_2\right)^2.$$

So far, we mainly looked at examples where the coefficients were constants (basically, we worked at one tangent space at a fixed point $p\in M$ of the base manifold). In general, they can depend on the position coordinates. To extend the metrics we used a theoretical  continuity argument but we are going to give an explicit example for a locally symmetric polynomial metric with non-constant coefficients as well. 

\begin{figure}
\includegraphics[width=.5\linewidth]{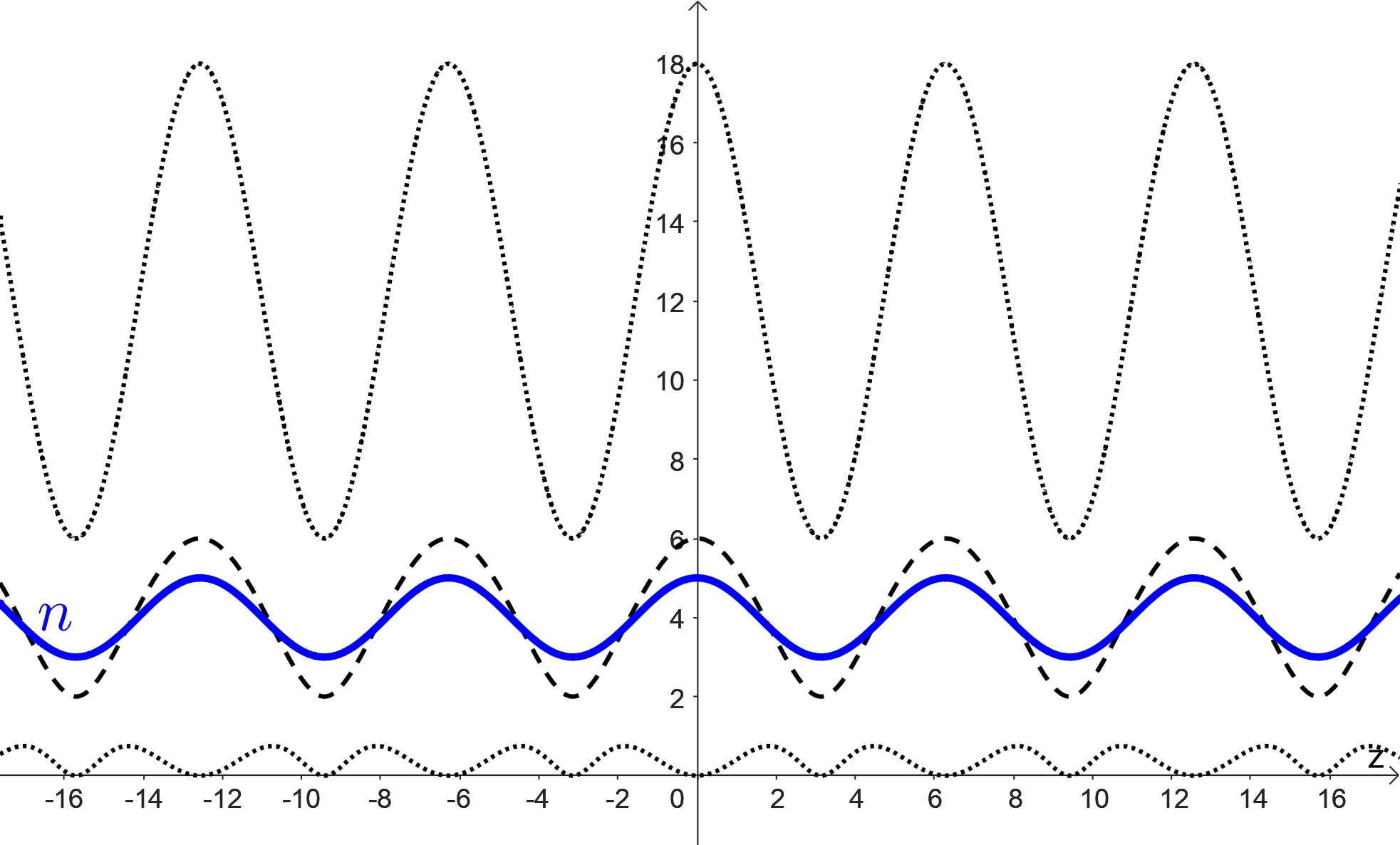}
\caption{Example \ref{sincosex2d}: the graph of the coefficient function $n$ (bold curve) with the lower and upper bounds $\frac{3}{2}\sqrt{4l^2+2m^2}-3l$ and $6l$, respectively (dotted curves), and the critical values $\frac{8l^2+m^2}{4l}$ (dashed curve) as functions of $z:=x_1x_2$. The reducibility of the definiteness polynomial changes each time $n$ crosses the dashed curve.}
\end{figure}

\begin{Exm} \label{sincosex2d} {\emph{Let's set $l(x)=\cos(x_1x_2)+2$, $m(x)=\sqrt{2}\sin(x_1x_2)$ and $n(x)=\cos(x_1x_2)+4$. It is a direct calculation to verify that we have a (positive definite) Finsler metric because
\[ \dfrac{3}{2}\sqrt{4l^2+2m^2}-3l < 1 < 3 \leq n \leq 5 < 6 \leq 6l. \]
Only the first inequality requires some work, but it can be proved indirectly (writing $z:=x_1x_2$) by
\begin{gather*}
 \dfrac{3}{2}\sqrt{4(\cos(z)+2)^2+2(\sqrt{2}\sin(z))^2}-3(\cos(z)+2) > 1 \\
 3\sqrt{5+4\cos(z)} > 7+3\cos(z) \\
 45+36\cos(z) > 49+42\cos(z)+9\cos^2(z) \\
 0 >4+6\cos(z)+9\cos(z)^2=(1+3\cos(z))^2+3,
\end{gather*}
which is a contradiction. It can also be checked (by a computing program, for example) that the reducibility of the definiteness polynomial can change from point to point.}}
\end{Exm}

\section{Computations in 3D}

If the base manifold is of dimension $3$, then the fourth power of a locally symmetric fourth root metric must be of the form 
\[P(s^1, s^2, s^3)=F^4(x,y)=A(x,y)=a(x) (y_1+y_2+y_3)^4 + b(x) (y_1+y_2+y_3)^2 ( y_1 y_2 + y_1 y_3 + y_2 y_3 ) +$$
$$ c(x) ( y_1 y_2 + y_1 y_3 + y_2 y_3 )^2 +d(x) (y_1+y_2+y_3)y_1 y_2 y_3,\]
where $a(x)=c_{400}(x)$, $b(x)=c_{210}(x)$, $c(x)=c_{020}(x)$ and $d(x)=c_{101}(x)$. Introducing the functions
\[l(x):=a(x), \ m(x):= 4a(x)+b(x), \ n(x):= 6a(x) + 2b(x) +c(x),\]
\[q(x) := 12 a(x)+ 5b(x) + 2c(x)+d(x),\]
we have that 
\begin{gather}
A(x,y) = l \left(y_1^4+y_2^4+y_3^4 \right)+m\left(y_1^3 y_2 + y_1^3 y_3 +  y_2^3 y_1 + y_2^3 y_3 + y_3^3 y_1 + y_3^3 y_2  \right)+  \label{3dmetric} \\
n\left(y_1^2y_2^2 + y_1^2y_3^2 + y_2^2y_3^2\right)+ q\left(y_1^2y_2y_3+y_1 y_2^2 y_3+ y_1 y_2 y_3^2 \right). \notag
\end{gather}
The partial derivatives of $A$ are
\begin{align*}
A_1(x,y) =\ & 4ly_1^3+ m\left(3y_1^2 y_2+3y_1^2 y_3+y_2^3+y_3^3\right)+ \\
& n\left(2y_1 y_2^2+2y_1y_3^2\right)+  q\left(2y_1y_2y_3+y_2^2y_3+y_2y_3^2\right), \\
A_2(x,y) =\ & 4ly_2^3+m\left(y_1^3+3y_1y_2^2+3y_2^2y_3+y_3^3\right)+ \\
& n\left(2y_1^2y_2+2y_2y_3^2\right)+q\left(y_1^2y_3+2y_1y_2y_3+y_1y_3^2\right), \\
A_3(x,y) =\ & 4ly_3^3+m\left(y_1^3+3y_1y_3^2+y_2^3+3y_2y_3^2\right)+ \\ & n\left(2y_1^2 y_3+2y_2^2y_3\right)+q\left(y_1^2y_2+y_1y_2^2+2y_1y_2y_3\right), \\[3pt]
A_{11}(x,y)=\ &  12l y_1^2 + m\left( 6y_1y_2+6y_1y_3\right)+ n \left(2y_2^2 + 2y_3^2\right) + 2q y_2y_3,\\
A_{12}(x,y)=A_{21}(x,y) =\ & m \left(3y_1^2 + 3y_2^2\right)+ 4n y_1y_2 + q\left(2y_1y_3+ 2y_2y_3 + y_3^2\right),\\
A_{13}(x,y)=A_{31}(x,y) =\ &  m \left(3y_1^2+3y_3^2 \right)+4n y_1y_3+ q \left( 2y_1y_2 + y_2^2 + 2y_2y_3\right),\\
A_{22}(x,y) =\ & 12 l y_2^2 + m \left(6 y_1y_2+ 6 y_2 y_3\right)+ n\left(2 y_1^2 + 2 y_3^2\right)+ 2q  y_1y_3,\\
A_{23}(x,y)=A_{32}(x,y) =\ & m \left( 3y_2^2 + 3y_3^2\right)+ 4n y_2y_3  + q\left(y_1^2 + 2y_1y_2 + 2y_1y_3\right),\\
A_{33}(x,y) =\ & 12 l y_3^2 + m \left(6 y_1y_3+ 6 y_2y_3\right)+ n \left(2 y_1^2 + 2 y_2^2\right)+ 2q y_1y_2.
\end{align*}
The Hessian 
\[ A_{ij}=\left[
\begin{matrix}
A_{11} & A_{12} & A_{13}\\ A_{21} & A_{22} & A_{23} \\ A_{31} & A_{32} & A_{33}
\end{matrix}
\right] \]
is of the form
\[A_{11}=
\left[
\begin{matrix}
y_1 & y_2 & y_3 
\end{matrix}
\right] 
\left[
\begin{matrix}
12 l & 3m & 3m \\  3m & 2n & q \\  3m & q & 2n
\end{matrix}
\right]
\left[
\begin{matrix}
y_1 \\  y_2 \\ y_3
\end{matrix}
\right], \ A_{12}=A_{21}=
\left[
\begin{matrix}
y_1 & y_2 & y_3 
\end{matrix}
\right]
\left[
\begin{matrix}
3m & 2n & q \\  2n & 3m & q \\  q & q & q
\end{matrix}
\right]
\left[
\begin{matrix}
y_1 \\  y_2 \\ y_3
\end{matrix}
\right],$$
$$A_{22}=
\left[
\begin{matrix}
y_1 & y_2 & y_3 
\end{matrix}
\right]
\left[
\begin{matrix}
2n & 3m & q \\  3m & 12l & 3m \\  q & 3m & 2n
\end{matrix}
\right]
\left[
\begin{matrix}
y_1 \\  y_2 \\ y_3
\end{matrix}
\right], \ A_{13}=A_{31}=
\left[
\begin{matrix}
y_1 & y_2 & y_3 
\end{matrix}
\right]
\left[
\begin{matrix}
3m & q & 2n \\  q & q & q \\  2n & q & 3m
\end{matrix}
\right]
\left[
\begin{matrix}
y_1 \\  y_2 \\ y_3
\end{matrix}
\right],$$
$$ A_{33}=
\left[
\begin{matrix}
y_1 & y_2 & y_3 
\end{matrix}
\right]
\left[
\begin{matrix}
2n & q & 3m \\  q & 2n & 3m \\  3m & 3m & 12l
\end{matrix}
\right]
\left[
\begin{matrix}
y_1 \\  y_2 \\ y_3
\end{matrix}
\right], \ A_{23}=A_{32}= 
\left[
\begin{matrix}
y_1 & y_2 & y_3 
\end{matrix}
\right]
\left[
\begin{matrix}
q & q & q \\  q & 3m & 2n \\  q & 2n & 3m
\end{matrix}
\right]
\left[
\begin{matrix}
y_1 \\  y_2 \\ y_3
\end{matrix}
\right]. \]

\begin{Lem} \label{lem1-3d} $A_{ij}$ is positive definite if and only if for all nonzero elements $v\in \pi^{-1}(U)\subseteq TM$,
\[ A_{11}>0, \hspace{1cm} A_{11}A_{22}-A_{12}^2>0 \hspace{1cm} \text{and} \hspace{1cm} \det A_{ij}>0.  \]
\end{Lem}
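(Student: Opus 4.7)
The plan is to recognize this lemma as a direct pointwise application of Sylvester's criterion for positive definiteness of real symmetric matrices. Since the Hessian $A_{ij}(v)$ is symmetric at each fixed $v$ (by equality of mixed partial derivatives, given that $A$ is a smooth polynomial), for each individual nonzero $v \in \pi^{-1}(U)$ the matrix $A_{ij}(v)$ is a $3 \times 3$ real symmetric matrix, and classical Sylvester's criterion says that such a matrix is positive definite if and only if its three leading principal minors are strictly positive.

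First I would identify the three leading principal minors of the $3 \times 3$ matrix
\[
A_{ij}(v)=\begin{bmatrix} A_{11}(v) & A_{12}(v) & A_{13}(v) \\ A_{21}(v) & A_{22}(v) & A_{23}(v) \\ A_{31}(v) & A_{32}(v) & A_{33}(v)\end{bmatrix}
\]
as the entry $A_{11}(v)$, the $2\times 2$ minor $A_{11}(v)A_{22}(v)-A_{12}(v)^2$ (using $A_{12}=A_{21}$), and the determinant $\det A_{ij}(v)$. Next I would state Sylvester's criterion and quantify it over $v$: the assertion that $A_{ij}$ is positive definite at every nonzero $v \in \pi^{-1}(U)$ is the same as saying the Sylvester conditions hold at every such $v$, which in turn is exactly the three displayed inequalities holding pointwise for every nonzero $v$.

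Both directions then drop out immediately. For the forward direction, positive definiteness at $v$ forces each leading principal minor at that $v$ to be positive, so each of the three functions is positive at every nonzero $v$. For the converse, if all three principal minors are positive at a given nonzero $v$, Sylvester's criterion gives positive definiteness of $A_{ij}(v)$ at that point; since this holds for every nonzero $v$, we obtain positive definiteness on the whole $\pi^{-1}(U) \setminus \{0\}$. There is no genuine obstacle here; the only thing worth remarking is the symmetry $A_{ij}=A_{ji}$, which is immediate from Schwarz's theorem on mixed partials and is already visible in the explicit formulas computed just before the lemma.
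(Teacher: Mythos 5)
Your proposal is correct and is exactly the standard argument; the paper in fact states Lemma \ref{lem1-3d} (like its 2D counterpart Lemma \ref{lem1}) without proof, treating it as an instance of Sylvester's criterion applied pointwise to the symmetric $3\times 3$ Hessian, which is precisely what you spell out.
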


Though the setting is exactly the same as in 2D, the computations are incomparably more difficult. Using MAPLE, we get that
\begin{dmath*}
A_{11}A_{22}-A_{12}^2=\left( 24ln-9m^2 \right) y_1^4+ \left( 72lm-12mn
 \right) y_1^3y_2+ \left( 24lq+12mn-12mq \right) y_1
^3y_3+ \left( 144l^2+18m^2-12n^2 \right) y_1
^2y_2^2+ \left( 72lm+36m^2-12nq \right) y_1^2y
_2y_3+ \left( 24ln+6mq+4n^2-4q^2 \right) y_1
^2y_3^2+ \left( 72lm-12mn \right) y_1y_2^3+
 \left( 72lm+36m^2-12nq \right) y_1y_2^2y_3+
 \left( 36m^2+24mn-8nq-4q^2 \right) y_1y_2y_
3^2+ \left( 12mn+4nq-4q^2 \right) y_1y_3^3+
 \left( 24ln-9m^2 \right) y_2^4+ \left( 24lq+12mn-12
mq \right) y_2^3y_3+ \left( 24ln+6mq+4n^2-4q^
2 \right) y_2^2y_3^2+ \left( 12mn+4nq-4q^2
 \right) y_2y_3^3+ \left( 4n^2-q^2 \right) y_3^4
\end{dmath*}
and
\begin{dmath*}
\det A_{ij} = a \left( y_1^6+y_2^6+y_3^6 \right) +b \left( y_1
^5y_2+y_1^5y_3+y_2^5y_1+y_3^5y_1+y_2^
5y_3+y_3^5y_2 \right) +c \left( y_1^4y_2^2+y_
1^4y_3^2+y_2^4y_1^2+y_3^4y_1^2+y_2
^4y_3^2+y_3^4y_2^2 \right) +d \left( y_1^4
y_2y_3+y_1y_2^4y_3+y_1y_2y_3^4
 \right) +e \left( y_1^3y_2^3+y_1^3y_3^3+y_2
^3y_3^3 \right) +f \left( y_1^3y_2^2y_3+y_1
^3y_2y_3^2+y_1^2y_2^3y_3+y_1^2y_2
y_3^3+y_1y_2^3y_3^2+y_1y_2^2y_3^3
 \right) +gy_1^2y_2^2y_3^2,
\end{dmath*}
where
\begin{align*}
a =\ & 48ln^2-12lq^2-36m^2n+18m^2q \\
b =\ & 144lmn+48lnq-48lq^2-54m^3+18m^2q-24mn^2+6mq^2 \\
c =\ & 288l^2n-108lm^2+72lmq+48ln^2-48lq^2+54m^3 \\ & +18m^2n-54m^2q+12mnq+6mq^2-24n^3+6nq^2 \\
d =\ & 432lm^2+288lmn-96lnq-48lq^2-108m^3 \\ & -72m^2n+96mnq-24mq^2-24n^2q+6q^3 \\
e =\ & 288l^2q+288lmn-288lmq+72m^2n+36m^2q \\ & -48mn^2-24mq^2-24n^2q+24nq^2 \\
f =\ & 864l^2m+432lm^2-144lmn-144lnq+108m^3 \\ & +72m^2n-36m^2q+48mn^2-96mnq-24mq^2+24n^2q+12q^3 \\
g =\ & 1728l^3+648lm^2-432ln^2+540m^3-162m^2q-216mnq+144n^3+18q^3.
\end{align*}

\subsection{Necessary conditions} The positive definiteness of the matrix $A_{ij}$ implies some necessary conditions.

\begin{Lem} The condition $A_{11}>0$ is equivalent to
\begin{equation} \label{nec01in3d}
0 < l \hspace{1cm} \text{and} \hspace{1cm} \dfrac{3m^2-4ln}{2l} < q < 2n.
\end{equation}
\end{Lem}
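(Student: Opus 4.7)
The plan is to interpret the condition $A_{11}(v) > 0$ for all nonzero $v$ as the positive definiteness of the symmetric $3\times 3$ matrix
\[
M := \begin{bmatrix} 12l & 3m & 3m \\ 3m & 2n & q \\ 3m & q & 2n \end{bmatrix}
\]
associated to the quadratic form $A_{11}$ (read off directly from the displayed expression for $A_{11}$). Sylvester's criterion then says $M$ is positive definite iff its three leading principal minors are positive:
\[
12l > 0,\qquad \det\!\begin{bmatrix} 12l & 3m \\ 3m & 2n\end{bmatrix} = 24ln - 9m^2 > 0,\qquad \det M > 0.
\]
The first is $l>0$; the second rewrites as $8ln > 3m^2$; everything hinges on a clean factorisation of the third minor.

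Expanding $\det M$ along the first row and exploiting the symmetry between the last two rows (their off-diagonal block is $\begin{bmatrix}3m & q\\3m & q\end{bmatrix}$ up to sign) I expect to obtain
\[
\det M \;=\; 6(2n-q)\bigl[2l(2n+q) - 3m^2\bigr],
\]
so that $\det M > 0$ iff the two factors have the same sign. I would then split into two cases. In Case A both factors are positive: $q < 2n$ together with $2l(2n+q) > 3m^2$, which, using $l>0$, rearranges exactly to the stated inequality $(3m^2 - 4ln)/(2l) < q$. In Case B both factors are negative, i.e.\ $q > 2n$ and $q < (3m^2 - 4ln)/(2l)$; compatibility would force $2n < (3m^2 - 4ln)/(2l)$, equivalently $8ln < 3m^2$, contradicting the second Sylvester condition. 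Case B is therefore vacuous.

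Finally I would check that the two-inequality statement in \eqref{nec01in3d} already forces the middle Sylvester condition, so nothing is lost by not listing it separately: from $l>0$ and $(3m^2 - 4ln)/(2l) < 2n$ one gets $3m^2 - 4ln < 4ln$, i.e.\ $8ln > 3m^2$, so $\det M = 6(2n-q)[2l(2n+q)-3m^2]$ is automatically the product of two positive numbers. This yields the equivalence in both directions and completes the proof.

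The only step that is not completely routine is the factorisation of $\det M$; once that is in hand the rest is a sign discussion. I expect the expansion to simplify because the two cross terms contributed by the $(1,2)$ and $(1,3)$ cofactors combine into a single $18m^2(2n-q)$ term that shares the factor $(2n-q)$ with the leading term $12l(4n^2 - q^2) = 12l(2n-q)(2n+q)$.
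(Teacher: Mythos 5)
Your proposal is correct and takes essentially the same route as the paper: Sylvester's criterion on the $3\times 3$ matrix of the quadratic form $A_{11}$, followed by a factorisation-and-sign discussion of the third leading minor. The factorisation you anticipate, $\det M = 6(2n-q)\bigl[2l(2n+q)-3m^2\bigr]$, does indeed hold (the paper writes $\Delta_3$ as $2l(4n^2-q^2)-3m^2(2n-q)>0$, which is the same thing), and your case analysis ruling out $q\ge 2n$ via contradiction with $8ln>3m^2$, together with the check that the stated bounds on $q$ already imply $8ln>3m^2$, mirrors the paper's argument in both directions.
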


\begin{proof} We can rephrase $A_{11}>0$ in terms of the corner minors as
\begin{itemize}
\item[($\Delta_1$)] $12l>0$ \ $\Longleftrightarrow$ \ $l>0$,
\item[($\Delta_2$)] $24ln-9m^2>0$ \ $\Longleftrightarrow$ \ $8ln>3m^2$,
\item[($\Delta_3$)] $48ln^2-12lq^2-36m^2n+18m^2q>0$.
\end{itemize}
Rearranging condition ($\Delta_3$) gives
\begin{align*}
8ln^2-2lq^2-6m^2n+3m^2q &> 0 \\
2l(4n^2-q^2)-3m^2(2n-q) &> 0 \\
2l(2n-q)(2n+q) &> 3m^2(2n-q).
\end{align*}
The choice $q=2n$ leads to a contradiction immediately; so does $q > 2n \Leftrightarrow 2n-q<0$, because ($\Delta_3$) and ($\Delta_2$)  give
\begin{align*}
2l(2n+q) &< 3m^2 < 8ln \\
2lq &< 4ln \\
q &< 2n.
\end{align*}
Therefore $q<2n \Leftrightarrow 2n-q>0$. After dividing inequality
$$2l(2n-q)(2n+q) > 3m^2(2n-q)$$
by $2n-q$ we have the lower bound of \eqref{nec01in3d} for the coefficient $q$. Conversely, $l>0$ is equivalent to ($\Delta_1$). \eqref{nec01in3d} also implies ($\Delta_2$) because
\[3m^2 < 2lq+4ln=2l(2n+q) < 2l(2n+2n) =8ln.  \]
Finally, ($\Delta_3$) can be given in the following way:
\[\begin{array}{rclcr}
2l(2n+q) &>& 3m^2 & \hspace{2cm} & /\cdot (2n-q)>0 \\[3pt]
2l(2n-q)(2n+q) &>& 3m^2(2n-q) &&   \\[3pt]
2l(4n^2-q^2) &>& 3m^2(2n-q) && \\[3pt]
2l(4n^2-q^2)-3m^2(2n-q) &>& 0 && \\[3pt]
8ln^2-2lq^2-6m^2n+3m^2q &>& 0. && 
\end{array} \] 
\end{proof}

\begin{Lem} If $A_{ij}$ is positive definite in 3D, then the coefficients $l,m,n$ must satisfy all the conditions from Theorem \ref{2dmaintheorem} in 2D.
\end{Lem}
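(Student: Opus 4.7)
The plan is to reduce the 3D positive definiteness to the 2D case by restricting to the coordinate plane $y_3=0$. The crucial structural observation is that setting $y_3=0$ in the 3D polynomial \eqref{3dmetric} yields exactly
\[
\tilde A(x, y_1, y_2) := A(x, y_1, y_2, 0) = l(x)\bigl(y_1^4+y_2^4\bigr) + m(x)\bigl(y_1^3 y_2 + y_1 y_2^3\bigr) + n(x)\, y_1^2 y_2^2,
\]
which is precisely the 2D polynomial \eqref{poly2} with the same coefficients $l,m,n$. All monomials that are killed by $y_3=0$ involve $q$, $n$-with-$y_3^2$, or $m$-with-$y_3^3$-mixed terms, so $\tilde A$ really depends only on $l,m,n$.

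Next I would argue that the Hessian of $\tilde A$ at any $(y_1,y_2)\neq (0,0)$ is positive definite. For $i,j\in\{1,2\}$ partial differentiation commutes with the restriction $y_3=0$, so
\[
\frac{\partial^2 \tilde A}{\partial y_i\partial y_j}(y_1,y_2) = A_{ij}(x, y_1, y_2, 0).
\]
This is straightforward to verify directly from the formulas for $A_{11}$, $A_{12}$ and $A_{22}$ listed above: at $y_3=0$ they collapse to the 2D expressions $12l y_1^2 + 6m y_1 y_2 + 2n y_2^2$, $3m y_1^2 + 4n y_1 y_2 + 3m y_2^2$ and $2n y_1^2 + 6m y_1 y_2 + 12l y_2^2$, respectively. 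Now if $A_{ij}(x,\cdot)$ is positive definite at the nonzero vector $v=(y_1,y_2,0)$, then its upper-left $2\times 2$ principal submatrix is positive definite by the standard fact that every principal submatrix of a positive definite matrix is positive definite. This is exactly the Hessian of $\tilde A$ at $(y_1,y_2)$.

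Therefore the Hessian of the pointwise polynomial $\tilde A$ is positive definite at all nonzero elements $(y_1,y_2)$. Applying Lemma \ref{lemma:2} in dimension two, $\sqrt[4]{\tilde A}$ is a (positive definite) Finsler metric on the 2D slice. Theorem \ref{2dmaintheorem} then gives precisely
\[
\frac{3}{2}\sqrt{4l^2+2m^2}-3l < n < 6l,
\]
which is the claim. No real obstacle arises here: the only substantive point to check carefully is the commutation of restriction and differentiation, i.e.\ that the upper-left $2\times 2$ block of the 3D Hessian at $y_3=0$ genuinely agrees with the 2D Hessian of $\tilde A$, and this is immediate from the explicit formulas.
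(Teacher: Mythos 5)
Your proposal is correct and follows essentially the same route as the paper: restrict to the slice $y_3=0$ and observe that the upper-left $2\times 2$ block of the 3D Hessian at $(y_1,y_2,0)$ coincides with the 2D Hessian of $\tilde{A}=A|_{y_3=0}$, which is positive definite since every principal submatrix of a positive definite matrix is positive definite. The paper performs the same reduction, merely packaging it as an explicit comparison of the restricted corner minors $A_{11}$ and $A_{11}A_{22}-A_{12}^2$ at $y_3=0$ with the 2D conditions of Lemma~\ref{lem1}.
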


\begin{proof} $A_{11}>0$ in 3D obviously implies the positiveness of $A_{11}$ in 2D (see conditions ($\Delta_1$) and ($\Delta_2$). Furthermore, taking condition $A_{11}A_{22}-A_{12}^2>0$ in 3D and setting $y_3=0$ gives
\[(24ln-9m^2) \left(y_1^4+y_2^4\right)+(72lm-12mn)\left(y_1^3y_2+y_1y_2^3 \right)+(144l^2+18m^2-12n^2)y_1^2y_2^2 > 0. \]
This is exactly the condition of $\det A_{ij}>0$ in 2D.
\end{proof}

To sum up, we currently have the necessary conditions 
\begin{equation} \label{3dneccond}
\dfrac{3}{2}\sqrt{4l^2+2m^2}-3l < n < 6l \quad \text{and} \quad \dfrac{3m^2-4ln}{2l} < q < 2n
\end{equation}
for the positive definiteness of $A_{ij}$ in 3D.

\begin{Con} {\emph{Find a complete system of conditions for positive definiteness in terms of the coefficients of a locally symmetric fourth root metric in 3D.}}
\end{Con}

\subsection{Computations under a special choice of the coefficients} 

\begin{Exm} {\emph{Let us choose $l=1, m=2, n=3$ and $q=4$. We are going to check that this yields a positive definite Hessian $A_{ij}$. The corner minors are
\[A_{11}=\left[
\begin{matrix}
y_1 & y_2 & y_3 
\end{matrix}
\right] 
\left[
\begin{matrix}
12 & 6 & 6 \\  6 & 6 & 4 \\  6 & 4 & 6
\end{matrix}
\right]
\left[
\begin{matrix}
y_1 \\  y_2 \\ y_3
\end{matrix}
\right], \]
\begin{dmath*}A_{11}A_{22}-A_{12}^2= 36y_1^4+72y_1^3y_2+72y_1^3y_3+108y_1^2y_2^2+144y_1^2y_2y_3+92y_1^2y_3^2+72y_1y_2^3+144y_1y_2^2y_3+128y_1y_2y_3^2+56y_1y_3^3+36y_2^4+72y_2^3
y_3+92y_2^2y_3^2+56y_2y_3^3+20y_3^4 \\
= 4(y_1^2+y_2^2+y_3^2+y_1y_2+y_1y_3+y_2y_3)(9y_1^2+9y_2^2+5y_3^2+9y_1y_2+9y_1y_3+9y_2y_3),
\end{dmath*}
\begin{dmath*}
\det A_{ij}= 96y_1^6+288y_1^5y_2+288y_1^5y_3+576y_1
^4y_2^2+864y_1^4y_2y_3+576y_1^4y_3
^2+672y_1^3y_2^3+1440y_1^3y_2^2y_3+
1440y_1^3y_2y_3^2+672y_1^3y_3^3+576
y_1^2y_2^4+1440y_1^2y_2^3y_3+2016y_1^
2y_2^2y_3^2+1440y_1^2y_2y_3^3+576y_
1^2y_3^4+288y_1y_2^5+864y_1y_2^4y_
3+1440y_1y_2^3y_3^2+1440y_1y_2^2y_3
^3+864y_1y_2y_3^4+288y_1y_3^5+96y_2
^6+288y_2^5y_3+576y_2^4y_3^2+672y_2^
3y_3^3+576y_2^2y_3^4+288y_2y_3^5+96
y_3^6 \\
=96(y_1^2+y_2^2+y_3^2+y_1y_2+y_1y_3+y_2y_3)^3.
\end{dmath*}
$A_{11}$ is always positive, because the corner minors of the base matrix are $12, 36$ and $96$, respectively. Conditions
$$A_{11}A_{22}-A_{12}^2 > 0\quad \textrm{and} \quad \det A_{ij} >0$$
are also satisfied (except the origin) because the factors are positive definite quadratic forms. Using a continuity argument we also have examples with non-constant coefficients by choosing $l(x)\approx 1$, $m(x)\approx 2$, $n(x)\approx 3$ and $q(x)\approx 4$.}}
\end{Exm}

\section{Acknowledgments}

M\'{a}rk Ol\'{a}h has received funding from the HUN-REN Hungarian Research Network.

\end{document}